\newtheorem{lemma}[subsection]{Lemma}
\newtheorem{thm}[subsection]{Theorem}
\newtheorem{prop}[subsection]{Proposition}
\newtheorem{rem}[subsection]{Remark}
\newtheorem{coro}[subsection]{Corollary}
\newtheorem{con}[subsection]{Conjecture}
\newcommand{\ra}{\rightarrow}
\newcommand{\mdr}{M(d,r)}
\newcommand{\mone}{\mo_{\p^2}(1)}
\newcommand{\mmon}{\mo_{\p^2}(-1)}
\newcommand{\mo}{\mathcal{O}}
\newcommand{\p}{\mathbb{P}}
\newcommand{\bt}{\mathbb{T}}
\newcommand{\bc}{\mathbb{C}}
\newcommand{\bl}{\mathbb{L}}
\begin{document}
\fontsize{12pt}{14pt} \textwidth=14cm \textheight=21 cm
\numberwithin{equation}{section}
\title{Affine pavings for moduli spaces of pure sheaves on $\mathbb{P}^2$ with degree $\leq 5$.}
\author{Yao YUAN}
\date{\small\textsc 
MSC, Tsinghua University, Beijing 100084, China
\\ yyuan@mail.math.tsinghua.edu.cn.}
\maketitle
\begin{flushleft}{\textbf{Abstract.}}
Let $M(d,r)$ be the moduli space of semistable sheaves of rank 0, Euler characteristic $r$ and first Chern class $dH~(d>0)$, with $H$ the hyperplane class in $\p^2$.  In \cite{myself} we gave an explicit description of the class $[\mdr]$ of $\mdr$ in the Grothendieck ring of varieties for $d\leq 5$ and $g.c.d(d,r)=1$.  In this paper we compute the fixed locus of $\mdr$ under some $(\bc^{*})^2$-action and show that $\mdr$ admits an affine paving for $d\leq 5$ and $g.c.d(d,r)=1$.  We also pose a conjecture that for any $d$ and $r$ coprime to $d$, $\mdr$ would admit an affine paving.   
\end{flushleft}

\section{Introduction.}
There are many interesting results on moduli spaces of 1-dimensional semistable sheaves on surfaces, mainly with surfaces K3, abelian or rational, for instance \cite{jcmm}, \cite{jmdmm}, \cite{lee}, \cite{mmz}, \cite{ky}, \cite{yy}, \cite{yuan} and \cite{myself}.  Recently because of its close relation to PT-invariants (defined in \cite{pt}) on local Calabi-Yau 3-fold, this kind of moduli spaces over Fano surfaces become even more worthy of study.  

Let $M(d,r)$ be the moduli space of semistable sheaves of rank 0, first Chern class $dH~(d>0)$ and Euler characteristic $r$ on $\p^2$.  In Physics, the Euler number $e(M(d,r))$ up to a sign is called the BPS state of weight 0 for local $\p^2$ (see Equation (4.2) and Table 4 in Section 8.3 in \cite{kkv}).  Toda's work in \cite{toda} implies that $e(\mdr)$ only depends on $d$ as long as $r$ coprime to $d$.  However there is so far no good understanding for a general $\mdr$ even at the Euler number level.    

For $d\leq 5$ and $g.c.d.(d,r)=1$, we have already shown that the class $[\mdr]$ of $\mdr$ in the Grothendieck ring of varieties can be expressed as a summation $\sum b_{2i}\bl^i$ with $\bl^i:=[\mathbb{A}^i]$, and we have also computed all the numbers $b_{2i}$ (See Theorem 5.1, Theorem 5.2, Theorem 6.1 in \cite{myself}).  In this paper we study the fixed locus on $\mdr$ of the $(\bc^{*})^{2}$-action induced by some $(\bc^{*})^2$-action on $\p^2$, and we prove the following theorem.
\begin{thm}\label{main}For $d\leq 5$ and $g.c.d.(d,r)=1$, the $(\bc^{*})^2$-fixed locus of $\mdr$ can be decomposed into a union of affine spaces, i.e. $\mdr$ admits an affine paving.  More precisely we have

1, The $(\bc^{*})^2$-fixed loci of $M(1,1)$, $M(2,1)$ and $M(3,1)$ consist of 3, 6 and 27 points respectively.

2, The $(\bc^{*})^2$-fixed locus of $M(4,1)$ can be decomposed into the union of 186 points and 6 affine lines $\mathbb{A}^1$.

3, The $(\bc^{*})^2$-fixed loci of $M(5,1)$ can be decomposed into the union of 1545 points,  144 affine lines $\mathbb{A}^1$ and 6 affine planes $\mathbb{A}^2$.

4, The $(\bc^{*})^2$-fixed loci of $M(5,2)$ can be decomposed into the union of 1506 points,  186 affine lines $\mathbb{A}^1$ and 3 affine planes $\mathbb{A}^2$.
\end{thm}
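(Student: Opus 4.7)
My plan is to exploit the Białynicki-Birula decomposition. Because $\mdr$ is smooth and projective whenever $\gcd(d,r)=1$, a sufficiently generic one-parameter subgroup $\lambda : \bc^{*}\hookrightarrow (\bc^{*})^{2}$ chops $\mdr$ into Zariski-locally-trivial affine-bundle cells over the connected components of $\mdr^{\lambda}=\mdr^{(\bc^{*})^{2}}$; once every fixed component is itself an affine space, the cells combine to an affine paving of $\mdr$. So the entire content of the theorem reduces to the decomposition of the fixed locus.

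To describe $\mdr^{(\bc^{*})^{2}}$, I would first use the fact that simplicity of stable sheaves (forced by $\gcd(d,r)=1$) implies that a stable class is fixed iff the sheaf admits a $(\bc^{*})^{2}$-equivariant structure. The scheme-theoretic support is then a torus-invariant curve $C=a_{0}L_{0}+a_{1}L_{1}+a_{2}L_{2}$ with $a_{0}+a_{1}+a_{2}=d$, where $L_{0},L_{1},L_{2}$ are the three coordinate lines of $\p^{2}$. For each such partition I would enumerate the equivariant semistable pure sheaves on $C$ with $\chi=r$. Away from the three torus-fixed points $p_{ij}=L_{i}\cap L_{j}$ the sheaf is locally a direct sum of twists of the structure sheaves of $L_{i}$ with prescribed equivariant weights, so it is determined by a small amount of discrete data. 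The genuine moduli come from the local equivariant model at each $p_{ij}$, where the stalk is an equivariant module over the two-variable toric local ring; once the underlying "shape" of this module is fixed, its remaining deformations form an affine space of finitely many gluing/extension parameters describing equivariant maps between the indecomposable pieces on the two branches through $p_{ij}$.

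The classification then proceeds support-by-support. For $d\leq 3$ all of the local equivariant modules at the $p_{ij}$ turn out to be rigid, so every fixed point is isolated, and a direct count gives the numbers $3$, $6$ and $27$. For $d=4,5$ strata with non-trivial gluing parameters appear, coming from supports with at least two components of multiplicity $\geq 2$ meeting at a toric fixed point, and I would verify stratum by stratum that the semistability inequality is satisfied on the entire parameter space (so the cell is a full affine space, not a proper subvariety) and that the claimed dimensions and counts emerge.

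The main obstacle is the bookkeeping for $d=5$. The richest supports, such as $3L_{i}+2L_{j}$ and $2L_{i}+2L_{j}+L_{k}$, carry several indecomposable equivariant summands meeting at the relevant $p_{ij}$, and one has to track how the semistability inequality interacts with every equivariant subsheaf to confirm that the gluing parameter spaces really furnish $\mathbb{A}^{1}$'s and $\mathbb{A}^{2}$'s rather than proper locally closed subvarieties. A practical sanity check is to compare the Euler characteristic and Poincaré polynomial of the resulting paving with the class $[\mdr]$ already computed in \cite{myself}: matching the Betti numbers $b_{2i}$ recorded there confirms the totals $186$, $1545$ and $1506$ appearing in the statement.
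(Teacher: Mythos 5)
Your route is sound but genuinely different from the paper's. The paper never classifies equivariant sheaves on the invariant supports $a_0L_0+a_1L_1+a_2L_2$; instead it represents every stable sheaf as $F=\mathrm{coker}(f)$ for a pair $(E,f)$ with $E$ a direct sum of line bundles, stratifies $\mdr$ by the splitting type of $E$ (each stratum is $\bt$-invariant), and inside each stratum translates the fixed-point condition into finding $\bt$-eigenvectors in explicit spaces of sections such as $H^0(Q_f\otimes\mo_{\p^2}(2))$, $\p(H^0(I_{\bar d}\otimes\mo_{\p^2}(d)))$, or Grassmannians of subspaces of $H^0(\mo_{\p^2}(2))$. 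What you propose is essentially the method of Choi--Maican \cite{jcmm}, which the paper itself credits with the $d=4$ case; its advantages are that it is intrinsic to the sheaves and that it makes explicit the Bia{\l}ynicki-Birula step (smoothness and projectivity of $\mdr$ for $g.c.d.(d,r)=1$, plus a generic one-parameter subgroup with $\mdr^{\lambda}=\mdr^{\bt}$), which the paper leaves implicit in the phrase ``i.e.\ $\mdr$ admits an affine paving.'' The paper's method has the advantage that every parameter space of fixed data is a concrete linear-algebraic object whose eigenspace decomposition can be read off from a matrix.

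One point in your plan needs sharpening. You write as though the local gluing/extension parameters automatically ``form an affine space,'' but the connected components of the fixed locus are projective --- for $M(4,1)$ they are $180$ points and $6$ copies of $\p^1$ by \cite{jcmm} --- and the theorem asserts an affine \emph{paving} of the fixed locus, not that its components are affine. Concretely, the parameter spaces that actually arise are things like a fixed $\p^1$ inside $\p(H^0(Q_f\otimes\mo_{\p^2}(2)))$ meeting the forbidden (non-pure) locus in one point, or, for $M(5,2)$, a piece isomorphic to $\p^1\times\p^1$ minus the diagonal, which must be decomposed by hand into $\mathbb{A}^2\sqcup\mathbb{A}^1$. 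So the stratum-by-stratum verification you defer is precisely where the content and the specific numbers live. Finally, the sanity check against \cite{myself} is weaker than you suggest: since every cell has Euler characteristic $1$, comparing with $e(\mdr)=\sum b_{2i}$ only confirms the \emph{total} number of cells ($192$, $1695$, $1695$), and recovering the breakdown into points, $\mathbb{A}^1$'s and $\mathbb{A}^2$'s from the $b_{2i}$ would additionally require computing the Bia{\l}ynicki-Birula indices at each fixed cell.
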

Theorem \ref{main} implies that the formulas given in \cite{myself} not only provide motive decompositions but also cell decompositions of the moduli spaces.

\begin{rem}\label{otnis}Theorem 6.1 in \cite{myself} shows that $[M(5,1)]=[M(5,2)]$.  But these two moduli spaces don't have the same $(\bc^{*})^2$-fixed locus, hence we know there is no $(\bc^{*})^2$-equivariant isomorphisms between them.  
\end{rem}
We expect Theorem \ref{main} is true in larger generality and we pose a conjecture as follows for the future study.
\begin{con}\label{icon}$\mdr$ admits an affine paving for all $d$ and $g.c.d.(d,r)=1$.
\end{con}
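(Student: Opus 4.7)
The natural strategy, following Theorem \ref{main}, is to combine the Bialynicki-Birula decomposition with a structural analysis of the $(\bc^{*})^2$-fixed locus. Since $g.c.d.(d,r)=1$ forces $\mdr$ to be smooth and projective, a generic one-parameter subgroup $\bc^{*}\hookrightarrow(\bc^{*})^2$ has the same fixed locus as the full torus, and BB produces a decomposition of $\mdr$ into Zariski-locally-trivial affine bundles over the connected components of $\mdr^T$. Hence the conjecture reduces to proving that $\mdr^T$ admits an affine paving for every coprime pair $(d,r)$.

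To attack $\mdr^T$, I would first stratify by the scheme-theoretic support of the sheaf. Any $T$-fixed pure sheaf is supported on a $T$-invariant degree-$d$ curve of the form $a_0L_0+a_1L_1+a_2L_2$ with $\sum a_i=d$ and $L_0,L_1,L_2$ the three coordinate lines, so $\mdr^T$ breaks into finitely many pieces indexed by compositions of $d$. On each such support, a $T$-equivariant pure sheaf is determined locally near each of the three torus-fixed points $p_i\in\p^2$ by $T$-character data for the stalk, which can be encoded in a tuple of partition-like combinatorial objects; the global sheaf is then a compatible gluing of these local pieces subject to the prescribed Euler characteristic $r$. The hope is that each resulting combinatorial type contributes a single affine cell to $\mdr^T$.

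The main obstacle is establishing affineness of these strata for arbitrary $d$. For $d\leq 5$ only $\mathbb{A}^0,\mathbb{A}^1$ and $\mathbb{A}^2$ appear, but for large $d$ the strata arise as moduli of $T$-equivariant extensions between simpler $T$-equivariant sheaves supported on proper sub-divisors, and they can have much higher dimension. My plan is an induction on $\sum a_i$: choose a maximal $T$-equivariant subsheaf $\mathcal{F}'\subset\mathcal{F}$ whose support has strictly smaller total multiplicity, whose moduli carries an affine paving by the inductive hypothesis, and realize each stratum of $\mdr^T$ as a quotient of an affine bundle of equivariant $\mathrm{Ext}^1$ classes by the equivariant automorphism group of $\mathcal{F}'$. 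Coprimality $g.c.d.(d,r)=1$ should force the stabilizers on semistable points to reduce to scalars, so these quotients become genuine affine spaces rather than weighted-projective strata that would destroy affineness.

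An alternative route worth pursuing in parallel is via Beilinson-type resolutions on $\p^2$: realize $T$-fixed sheaves in $\mdr$ as $T$-equivariant stable representations of an explicit finite quiver with relations, whose fixed-point loci are toric GIT quotients known to admit affine pavings. I expect the hardest step in either approach to be the base of the induction (or equivalently, the toric fixed-point analysis for long chains $a_iL_i$ of high multiplicity), where new stability walls appear as $r$ varies among residues modulo $d$ and the $T$-equivariant $\mathrm{Ext}^1$ groups can jump. Controlling these jumps in a way compatible with a uniform cell decomposition, independent of $d$, is where a genuinely new idea beyond the case-by-case analysis of Theorem \ref{main} will be required.
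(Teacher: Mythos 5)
The statement you are addressing is not proved in the paper at all: it is posed explicitly as a conjecture for future study, and the paper only establishes the cases $d\leq 5$ with $\gcd(d,r)=1$ by a case-by-case analysis of the strata $\{(E,f)\}$ indexed by the splitting type of $E$. So there is no proof in the paper to compare yours against, and your text should be judged as a proposed proof of an open problem. As such, it is not a proof. Your first step is sound: for $\gcd(d,r)=1$ the moduli space $\mdr$ is smooth and projective (since $\mathrm{Ext}^2(F,F)=\mathrm{Hom}(F,F(-3))^{*}=0$ for a stable one-dimensional sheaf $F$ on $\p^2$), so the Bialynicki--Birula decomposition for a generic one-parameter subgroup validly reduces the conjecture to showing that the fixed locus $\mdr^{T}$ admits an affine paving. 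But everything after that reduction is a program rather than an argument, and you say so yourself: the assertion that each combinatorial type of local $T$-character data contributes a single affine cell is labelled a ``hope,'' and the inductive step is a ``plan.''

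The concrete gap is exactly the one the paper's computations expose. The components of $\mdr^{T}$ are not isolated points in general (already for $d=4$ one finds affine lines, and for $d=5$ affine planes), and they arise as quotients of spaces of equivariant extension classes, or as loci such as $\p^1\times\p^1-\Delta(\p^1)$ inside fixed loci of Grassmannians cut out by open conditions like $\det(f)\neq 0$. In each case of the paper, affineness is verified by an explicit eigenspace decomposition of $H^0(Q_f\otimes\mo_{\p^2}(2))$ or its analogues, and the resulting cells are obtained by removing lower-dimensional projective subspaces from fixed projective subspaces --- an operation that happens to yield affine spaces in these low-degree cases but has no a priori reason to do so in general. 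Your induction on the support multiplicities does not address this: you would need to show that the open stability/nondegeneracy conditions always carve affine cells out of the fixed components of the relevant extension and section spaces, uniformly in $d$ and $r$, and you correctly identify that controlling the jumps of the equivariant $\mathrm{Ext}^1$ groups across stability walls is where a new idea is required. Until that step is supplied, the conjecture remains open, and your text should be presented as a strategy, not a proof.
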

They showed in \cite{jcmm} that the $(\bc^{*})^2$-fixed locus of $M(4,1)$ consists of 180 isolated points and 6 projective lines $\p^1$.  Their result implies ours for $d=4$.  Nevertheless we also include case $M(4,1)$ for the wholeness of the context.  Actually based on the stratification given in \cite{myself}, it is just an easy exercise to find out all the fixed points in $M(4,1)$.  

The structure of the paper is arranged as follows.  In Section 2, we define the $(\bc^{*})^2$-action on $\mdr$ induced by some $(\bc^{*})^2$-action on $\p^2$, then we recall the definition of some big open subset $W^d$ in $M(d,1)$ and show that $W^d$ is invariant under the action with fixed points isolated.  Then we prove Theorem \ref{main} for $d\leq3$, using the fact by Theorem 5.1 in \cite{myself} that $W^d=M(d,1)$ for $d\leq 3$.  In Section 3,  we study the fixed locus of $M(4,1)$.  The last two moduli spaces, $M(5,1)$ and $M(5,2)$, are dealt in the last section, Section 4.  

\begin{flushleft}{\textbf{Acknowledgments.}} I was partially supported by NSFC grant 11301292.  When I wrote this paper, I was a post-doc at MSC in Tsinghua University in Beijing.  Finally I thank Y. Hu for some helpful discussions.
\end{flushleft}      
\section{The $(\bc^{*})^2-$action and fixed points in $W^d$.}
Let $\bt:=(\bc^{*})^2$ acts on $\p^2$ by $(t_1,t_2)\cdot[x,y,z]=[x,t_1^{-1}y,t_2^{-1}z]$ for all $(t_1,t_2)\in\bt$ and $[x,y,z]\in\p^2$.  Denote by $\theta_{(t_1,t_2)}$ the automorphism of $\p^2$ given by the action of the point $(t_1,t_2)$.  The $\bt$-action on $\p^2$ induces an action on $\mdr$ defined by $(t_1,t_2)\cdot F=\theta_{(t_1,t_2)}^{*}F$ for any sheaf $F\in\mdr$.  

We recall some notations we used in \cite{myself}.  At first we say that a pair $(E,f)$ always satisfies the following two conditions. 
\begin{equation}\label{dsl}
(1) E\simeq\bigoplus_i \mo_{\p^2}(n_i)~i.e.E~is~a~direct~sum~of~line~bundles~on~\p^2;
\end{equation}  
\begin{equation}\label{inm}
(2) f\in Hom(E\otimes \mo_{\p^2}(-1),E)~and~moreover~f~is~injective.~~~~~~
\end{equation} 

By Definition 2.1 in \cite{myself}, $(E,f)\simeq(E',f')$ if $E\simeq E'$ and $\exists ~\varphi,\phi\in Isom(E,E')$, s.t. $f'\circ (\varphi\otimes id_{\mo_{\p^2}(-1)})=\phi\circ f$.  

There is 1-1 correspondence between isomorphism classes of pairs $(E,f)$ and isomorphism classes of pure sheaves $F$ of dimension 1 on $\p^2$ by assigning $(E,f)$ to $coker(f)$ (Proposition 2.5 in \cite{myself}).  We can put a stability condition on $(E,f)$ so that $(E,f)$ is (semi)stable iff so is $coker(f)$ (Definition 3.1 in \cite{myself}).
 
If there is no strictly semistable sheaves in $\mdr$, i.e. $r$ is coprime to $d$, then we can assign every point $F$ in $M(d,r)$ uniquely to a pair $(E,f)$ such that $coker(f)\simeq F$.

We view points in $\mdr$ as stable pairs $(E,f)$, then the $\bt$-action on $\mdr$ is quite explicit.  We have $(t_1,t_2)\cdot (E,f)=(\theta_{(t_1,t_2)}^{*}E,\theta_{(t_1,t_2)}^{*}f)\simeq(E,\theta_{(t_1,t_2)}^{*}f)$.  $f$ can be represented by a $d\times d$ matrix with $(i,j)$-th entry $a_{ij}(x,y,z)$ homogenous polynomials of $x,y,z$.  Then $\theta_{(t_1,t_2)}^{*}f$ is a map represented by the matrix with $(i,j)$-th entry $a_{ij}(x,t_1y,t_2z)$.  $F$ is fixed by $\bt$ iff $\forall (t_1,t_2)\in\bt$, $\exists$ an invertible matrix with entries $m_{(t_1,t_2)}^{ij}\in\bigoplus_{n\geq0}H^0(\mo_{\p^2}(n))$, such that $a_{ij}(x,y,z)=\sum_l a_{il}(x,t_1y,t_2z)m^{lj}_{(t_1,t_2)}$ for all $1\leq i,j\leq d$.

We stratify $M(d,r)$ by the form of $E$, then every stratum is a constructible set in $M(d,r)$ and invariant under the $\bt$-action.  

Let $r=1$.  Define $\widetilde{W}^d:=\{(E,f)|E\simeq\mo_{\p^2}\oplus\mmon^{\oplus d-1}\}\subset M(d,1).$  By the stability condition for a pair $(E,f)\in\widetilde{W}^d$, $f$ can be represented by the following matrix
\begin{equation}\label{matrix}\left(\begin{array}{ccc}0&1&0\\ A&0 &B\end{array}\right),\end{equation}
where $A$ is a $(d-1)\times 1$ matrix with entries in $H^0(\mo_{\p^2}(2))$ and $B$ a $(d-1)\times (d-2)$ matrix with entries in $H^0(\mone)$.  

Let $f_{B^t}:\mmon^{\oplus d-2}\ra\mo_{\p^2}^{\oplus d-1}$ be a morphism represented by the transform $B^t$ of $B$.  
Then $f_{B^t}$ is injective with cokernel a rank 1 sheaf $Q_f$. 

The dual $G^{\vee}$ of any 1-dimensional sheaf $G$ is defined as $G^{\vee}:=\mathcal{E}xt^1(G,\mo_{\p^2})$.  If $G$ is pure, then $G^{\vee\vee}\simeq G$ and moreover $G$ and $G^{\vee}$ are determined by each other (see \cite{yuan} Lemma A.0.13).
Any sheaf $F=coker(f)$ in $\widetilde{W}^d$ has its dual $F^{\vee}$ determined by the following sequence (see the diagram (4.2) in \cite{myself}).
\begin{equation}\label{dus}\xymatrix{0\ar[r]&\mo_{\p^2}(-2)\ar[r]^{~~~\sigma_f}&Q_f\ar[r]&F^{\vee}\otimes\mo_{\p^2}(-2)\ar[r]&0.}
\end{equation}

Define $W^d:=\{[(E,f)]\in \widetilde{W}^d|Q_f~is~torsion~free\}\subset\widetilde{W}^d.$  The complement of $W^d$ in $M(d,1)$ is of codimension $\geq2$.  We have 
\begin{prop}\label{bigopen}$W^d$ is $\bt$-invariant with isolated $\bt$-fixed points. 
\end{prop}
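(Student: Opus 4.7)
The plan is to establish invariance of $W^d$ first, then to reduce the analysis of the $\bt$-fixed locus to a finite combinatorial problem via an equivariant lift of the torus action. For invariance, I will note that the defining condition $E\simeq\mo_{\p^2}\oplus\mmon^{\oplus d-1}$ of $\widetilde{W}^d$ is $\bt$-invariant because $\theta_{(t_1,t_2)}^{*}\mo_{\p^2}(n)\simeq\mo_{\p^2}(n)$ for every $n$, and that the sheaf $Q_f$ is built functorially from the block $B$ of the matrix of $f$, so $Q_{\theta^{*}f}\simeq\theta^{*}Q_f$; torsion-freeness is preserved under pullback by an automorphism of $\p^2$, hence $W^d$ is $\bt$-invariant.

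For isolatedness of the fixed points, I would take $[(E,f)]\in W^d$ with $(E,f)$ $\bt$-fixed and exploit that $(E,f)$ is stable with automorphism group $\bc^{*}$ (since $\gcd(d,1)=1$). A standard argument for algebraic tori then produces a $\bt$-equivariant structure on $E$, unique up to an overall character of $\bt$, for which $f$ becomes $\bt$-equivariant. Such a structure is equivalent to a choice of characters $\chi_{0},\dots,\chi_{d-1}\in\mathrm{Hom}(\bt,\bc^{*})$ on the $d$ line-bundle summands of $E$; in the weight-adapted basis every entry of the matrix of $f$ then lies in a single weight subspace of some $H^{0}(\mo_{\p^2}(k))$, and because $\bt$ acts with pairwise distinct weights on the monomials in $x,y,z$, each such entry is forced to be a scalar multiple of a single monomial.

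Applied to the normal form \eqref{matrix}, this forces every entry of $A$ to be a multiple of a degree-$2$ monomial and every entry of $B$ a multiple of a degree-$1$ monomial. Only finitely many weight tuples $(\chi_{0},\dots,\chi_{d-1})$, modulo the overall character twist, admit matrices of this shape, and for each such tuple only finitely many monomial placements occur. The residual gauge group --- diagonal rescalings of the summands of $E$, together with the $GL$-factors that appear whenever several $\chi_{i}$ coincide --- acts with finitely many orbits on this finite list, so the $\bt$-fixed locus in $W^d$ reduces to a finite set of isolated points.

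The hard part will be handling the degenerate cases where several of the weights $\chi_i$ coincide: there an extra continuous $GL$-gauge group appears, and it could a priori sweep out positive-dimensional orbits in the space of monomial matrices. I plan to resolve this using the torsion-freeness of $Q_f$, which is precisely the condition cutting out $W^d$ inside $\widetilde{W}^d$: it should force the monomial entries of $B$ inside each such $GL$-block to be linearly independent in $H^{0}(\mone)$, thereby making the residual $GL$-action free with discrete quotient. This is where the restriction from $\widetilde{W}^d$ to $W^d$ plays its essential role, and it is the only step that requires any real work beyond bookkeeping of weights and monomials.
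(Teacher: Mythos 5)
Your invariance argument is fine and coincides with the paper's. The isolatedness part, however, has a real gap at exactly the step you defer to the end. Granting the equivariant lift and the reduction to monomial matrices (a legitimate, standard move for stable objects under a torus action), what you must prove is that the residual gauge group acts with \emph{finitely many orbits} on the space of admissible monomial matrices with fixed weight data. Your proposed mechanism --- torsion-freeness of $Q_f$ forces linear independence inside each $GL$-block, hence the $GL$-action is ``free with discrete quotient'' --- does not deliver this: a free $GL_m$-action on a variety of dimension greater than $m^2$ has a positive-dimensional quotient (e.g.\ $GL_2$ acting freely by left multiplication on rank-$2$ matrices in $\mathrm{Mat}_{2\times 3}(\bc)$ has quotient $Gr(2,3)$), so freeness is the wrong property to aim for. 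Worse, the group you actually have to quotient by is not just the reductive blocks: it is the full stabilizer of the normal form (\ref{matrix}) inside $\mathrm{Aut}(E)\times\mathrm{Aut}(E)$, including the unipotent part coming from $\mathrm{Hom}(\mmon,\mo_{\p^2})$. Already for $d=3$ this matters: taking $b_1=\mu_1y$, $b_2=\mu_2z$, $a_1=\lambda_1yq$, $a_2=\lambda_2zq$ (the weight-consistency you describe forces exactly this shape), the scalars $(\lambda_1,\lambda_2,\mu_1,\mu_2)$ modulo row and column rescalings sweep out a one-parameter family with invariant $\lambda_1\mu_2/\lambda_2\mu_1$, yet all of these matrices define the same sheaf; the missing identifications come from the off-diagonal gauge transformations, reflecting the fact that $A$ only matters modulo the image of $B$. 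So the step you call ``the only one requiring real work'' is both unfinished and aimed at the wrong mechanism.

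The paper's proof sidesteps all of this bookkeeping. On $W^d$, torsion-freeness gives $Q_f\simeq I_{\bar{d}}\otimes\mo_{\p^2}(d-2)$ (Lemma 4.6 of \cite{myself}), and $\sigma_f$ in (\ref{dus}) is $\det(f)$ viewed in $H^0(I_{\bar{d}}\otimes\mo_{\p^2}(d))$, so $[\sigma_f]$ is determined by the support curve of $F$. Hence a $\bt$-fixed point of $W^d$ is determined by a $\bt$-fixed point of $Hilb^{[\bar{d}]}(\p^2)$ together with a $\bt$-fixed member of $|dH|$, and both of those fixed sets are finite. If you want to salvage your framework, the cleanest route is to push your equivariant structure through this same correspondence: it induces one on $(Q_f,\sigma_f)$, forcing $I_{\bar{d}}$ to be a monomial ideal and $\sigma_f$ to be a weight vector in $H^0(I_{\bar{d}}\otimes\mo_{\p^2}(d))$, whose weight spaces are all at most one-dimensional; that yields finiteness without ever confronting the gauge quotient of monomial matrices.
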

\begin{proof}$\widetilde{W}^d$ is invariant under the $\bt$-action.  For every $(t_1,t_2)\in\bt$, $\theta_{(t_1,t_2)}^{*}Q_f$ is torsion free $\Leftrightarrow$ $Q_f$ is torsion free.  Hence $W^d$ is invariant under the $\bt$-action.  

By Lemma 4.6 in \cite{myself}, if $Q_f$ is torsion free, then $Q_f\simeq I_{\bar{d}}\otimes\mo_{\p^2}(d-2)$, with $\bar{d}:=\frac{(d-1)(d-2)}2$ and $I_{\bar{d}}$ some ideal sheaf of $\bar{d}$-points not lying on a curve of class $(d-3)H$ on $\p^2$.  

For $Q_f$ torsion free, $\sigma_f$ in (\ref{dus}) gives an element in $H^0(Q_f\otimes\mo_{\p^2}(2))=H^0(I_{\bar{d}}\otimes\mo_{\p^2}(d))$ which is exactly $det(f)$.  Hence the class $[\sigma_f]\in\p (H^0(I_{\bar{d}}\otimes\mo_{\p^2}(d)))$ is determined by the support $Supp(F)$ of $F$.  

Therefore, $F$ is fixed by $\bt$ iff both $I_{\bar{d}}$ and $Supp(F)$ are fixed by $\bt$.  Hence $W^d$ has finitely many $\bt$-fixed points because so do  the Hilbert scheme $Hilb^{[\bar{d}]}(\p^2)$ of $\bar{d}$-points on $\p^2$ and the linear system $|dH|$.  Hence the proposition.
\end{proof}

Notice that for $d\leq 4$, up to isomorphism $M(d,1)$ is the only moduli space such that there is no strictly semistable locus, since $M(d,r)\simeq M(d,r')$ if $r\equiv\pm r'$ (mod $d$).  We have
\begin{coro}\label{dlth}For $d\leq 3$ and $r$ coprime to $d$, $\mdr$ admits an affine paving.  More precisely, the $\bt$-fixed loci of $M(1,1)$, $M(2,1)$ and $M(3,1)$ consist of 3, 6 and 27 points respectively.
\end{coro}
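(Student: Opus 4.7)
The plan is to combine Theorem 5.1 of \cite{myself} (which gives $W^d = M(d,1)$ for $d \leq 3$) with Proposition \ref{bigopen} (which ensures isolated $\bt$-fixed points on $W^d$), then invoke a Bialynicki--Birula decomposition on the smooth projective variety $M(d,1)$ to obtain the paving, and finally read off the numbers $3, 6, 27$ from the combinatorial description of the fixed points.

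First I would reduce every relevant $\mdr$ to $M(d,1)$. As noted just before the corollary, for $d \leq 4$ the isomorphism class of $\mdr$ depends only on $r \bmod d$ up to sign, so in every case $\mdr \simeq M(d,1)$. These isomorphisms arise from twisting by $\mo_{\p^2}(1)$ and from the duality $F \mapsto F^{\vee}$ on pure dimension-$1$ sheaves; both operations commute with the $\bt$-action up to the obvious $\bt$-equivariant structure on $\mo_{\p^2}(1)$, so $\bt$-fixed loci and cell decompositions are transported between the $\mdr$. Granted this, it suffices to treat $M(d,1)$ for $d=1,2,3$. By Theorem 5.1 of \cite{myself}, $M(d,1)=W^d$ in this range, so by Proposition \ref{bigopen} the $\bt$-fixed locus is finite. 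Since $M(d,1)$ is smooth and projective (as $\gcd(d,1)=1$), one picks a generic one-parameter subgroup $\lambda\colon\bc^{*}\to\bt$ with the same fixed set and applies Bialynicki--Birula to obtain a stratification of $M(d,1)$ into attracting cells, each isomorphic to an affine space.

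It remains to count the fixed points. By the proof of Proposition \ref{bigopen}, a $\bt$-fixed point of $W^d = M(d,1)$ is encoded by a $\bt$-fixed ideal $I_{\bar d}\in\mathrm{Hilb}^{[\bar d]}(\p^2)$ together with a $\bt$-fixed class $[\sigma_f]\in\p(H^0(I_{\bar d}\otimes\mo_{\p^2}(d)))$; conversely every such datum produces, via \eqref{dus} and taking duals, a sheaf $F\in M(d,1)$. For $d=1$ and $d=2$ we have $\bar d=0$, so the count is just the number of $\bt$-fixed monomials in $H^0(\mo_{\p^2}(d))$, namely $3$ and $6$ respectively. For $d=3$ we have $\bar d=1$; the $\bt$-fixed ideals are the ideals of the three coordinate points of $\p^2$, and for any such $p$ the space $H^0(I_p\otimes\mo_{\p^2}(3))$ is spanned by the $9$ cubic monomials vanishing at $p$, yielding $3\times 9 = 27$ fixed points in total.

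The main obstacle I expect is verifying that the combinatorial correspondence in the last step is a genuine bijection with $M(d,1)^{\bt}$: one must check that every candidate pair $(I_{\bar d},[\sigma_f])$ produces a pair $(E,f)$ which is stable in the sense of Definition 3.1 of \cite{myself} and has $Q_f$ torsion free (so no candidate is to be discarded), and that distinct data give non-isomorphic pairs. For $d\leq 3$ this should follow by inspection from the normal form \eqref{matrix}, but the verification for $d=3$ is where the bulk of the casework sits; it also ensures that the $27$ attracting cells in the Bialynicki--Birula decomposition of $M(3,1)$ genuinely cover the whole space.
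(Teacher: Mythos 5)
Your proposal is correct and follows the same route as the paper: the paper's proof is simply the observation that $W^d=M(d,1)$ for $d\leq 3$ by Theorem 5.1 of \cite{myself} together with ``direct computation,'' and your counts ($3$, $6$, and $3\times 9=27$ via the $\bt$-fixed monomials and, for $d=3$, the three coordinate points) are exactly that computation carried out. The extra details you supply (reducing general $r$ to $r=1$ via the remark preceding the corollary, and invoking Bialynicki--Birula to pass from a finite fixed locus to an affine paving) are consistent with what the paper leaves implicit.
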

\begin{proof}By Theorem 5.1 in \cite{myself}, we have $W^d=M(d,1)$ for $d\leq3$, and by direct computation we get the corollary.
\end{proof}

\section{$\bt$-fixed points in $M(4,1)$.}

In this section we prove the following theorem.
\begin{thm}\label{flfour}The $\bt$-fixed locus of $M(4,1)$ can be decomposed into the union of 186 points and 6 affine lines $\mathbb{A}^1$.  Hence it admits an affine paving.
\end{thm}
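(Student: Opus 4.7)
The plan is to exploit the fact that the open stratum $W^4 \subset M(4,1)$ already has only isolated $\bt$-fixed points by Proposition \ref{bigopen}, and that its complement is of codimension $\geq 2$.  So the proof splits into two independent pieces: a combinatorial enumeration on $W^4$, and a finite case analysis on the remaining strata coming from the stratification of $M(4,1)$ developed in \cite{myself}.

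First, for the $W^4$ part, I would use the description extracted from the proof of Proposition \ref{bigopen}: a $\bt$-fixed stable pair in $W^4$ corresponds to a $\bt$-fixed ideal sheaf $I_3\subset\mo_{\p^2}$ of colength $\bar{d}=3$ whose three-point support is not contained in a line, together with a $\bt$-fixed class $[\sigma_f]\in\p(H^0(I_3\otimes\mo_{\p^2}(4)))$, i.e.\ a single degree-$4$ monomial lying in $I_3$.  The $\bt$-fixed length-$3$ ideals on $\p^2$ are monomial ideals supported at the three torus-fixed points $[1,0,0]$, $[0,1,0]$, $[0,0,1]$; I would enumerate them by distributing the colength $3$ among those three points (using monomial partitions at each), discard the configurations whose points all lie on a coordinate line, and for each surviving $I_3$ count the degree-$4$ monomials in $I_3$.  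Summing over all cases gives the contribution of $W^4$ to the total fixed-point count.

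Second, for the complement $M(4,1)\setminus W^4$, I would walk through the strata of $M(4,1)$ indexed by the decomposition type of $E$ (and by the torsion of $Q_f$) as set up in \cite{myself}.  On each stratum $f$ admits an explicit normal form as a matrix of homogeneous polynomials, and the $\bt$-fixed locus is cut out by demanding that every entry be a pure $\bt$-weight monomial, modulo the residual group of isomorphisms of $E$ that preserve the weight grading.  Counting free parameters in each such normal form then shows whether that fixed locus consists only of points or carries positive-dimensional affine pieces; the $6$ affine lines should arise from a single boundary stratum in which exactly one coefficient survives as a free $\bt$-weight-homogeneous parameter and cannot be normalized away by equivariant base change.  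As a consistency check I would compare with \cite{jcmm}, who found $180$ points and $6$ copies of $\p^1$: since $\p^1=\mathbb{A}^1\sqcup\{\mathrm{pt}\}$, their count converts to $180+6=186$ points together with $6$ affine lines, matching the stated theorem.

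The main obstacle will be the boundary analysis rather than the $W^4$ enumeration, which is a finite combinatorial exercise on monomial ideals.  For the boundary strata one must correctly identify \emph{all} decomposition types of $E$ occurring for $d=4$, $r=1$ and, within each, correctly quotient by the $\bt$-weight-preserving part of $\mathrm{Aut}(E)$; neglecting either side of this balance produces spurious $\mathbb{A}^1$'s or, conversely, fails to detect a higher-dimensional fixed component hidden inside what looks like a single point.  Once the bookkeeping is in place, affineness of each piece follows automatically from the monomial form of the surviving matrix entries.
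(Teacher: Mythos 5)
Your proposal is correct and follows essentially the same route as the paper: stratify $M(4,1)$ by the splitting type of $E$, count the isolated fixed points of $W^4$ via torus-fixed colength-$3$ monomial ideals not on a line together with a fixed degree-$4$ monomial section (the paper's $10\times 12=120$), and analyze the boundary strata $M_1-W^4$ and $M_2$ by explicit equivariant normal forms, with the $6$ affine lines arising exactly where a weight eigenspace of sections becomes two-dimensional. The only cosmetic difference is that the paper packages the boundary computation through the auxiliary data $(Q_f,[\sigma_f])$ and $(R_f,[\omega_f])$ and an eigenspace decomposition of $H^0(Q_f\otimes\mo_{\p^2}(2))$ rather than working directly on the matrix entries, and your conversion of the count in the reference ($180$ points plus $6$ copies of $\p^1$ giving $186$ points plus $6$ copies of $\mathbb{A}^1$) is the same consistency check the paper invokes.
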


$M(4,1)$ can be stratified into two following $\bt$-invariant strata.
\begin{eqnarray} & M_1:=&\widetilde{W}^4=\{[(E,f)]\in M(4,1)|E\simeq\mo_{\p^2}\oplus\mmon^{\oplus 3}\};\nonumber\\& M_2:=&\{[(E,f)]\in M(4,1)|E\simeq\mo_{\p^2}^{\oplus 2}\oplus\mmon\oplus\mo_{\p^2}(-2)\}.\nonumber
\end{eqnarray}
For a pair $(E,f)\in M_2$, $f$ can be represented by the following matrix
\begin{equation}\label{romf}\left(\begin{array}{cccc}b_1&b_2&0&0\\0&0&1&0\\0&0&0&1\\a_1&a_2&0&0\end{array}\right),
\end{equation}
where $b_i\in H^0(\mone)$ and $a_i\in H^0(\mo_{\p^2}(3))$.  The stability condition for such $(E,f)$ is equivalent to $kb_1\neq k'b_2$ for any $(k,k')\in\bc^2-\{0\}$.  

\begin{lemma}\label{mtfour}The $\bt$-fixed locus of $M_2$ consists of 42 points.
\end{lemma}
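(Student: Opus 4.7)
The plan is to work with the explicit parametrization (\ref{romf}) of $M_2$, viewing a point as an equivalence class of quadruples $(b_1,b_2,a_1,a_2)\in H^0(\mone)^{\oplus 2}\oplus H^0(\mo(3))^{\oplus 2}$ satisfying the stability condition that $b_1,b_2$ are linearly independent, taken modulo the group $G$ of automorphisms of pairs $(E,f)$ preserving the shape of the matrix. The $\bt$-action on $M_2$ comes from the weight action on homogeneous polynomials: $(t_1,t_2)$ sends the entry $a_{ij}(x,y,z)$ to $a_{ij}(x,t_1y,t_2z)$. A point is $\bt$-fixed exactly when for every $(t_1,t_2)\in\bt$ the transformed quadruple lies in the same $G$-orbit as the original.

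The first step is to describe $G$ explicitly. It is generated by a $GL_2$ acting simultaneously on the first two columns of (\ref{romf}), so that $(b_1,b_2)$ and $(a_1,a_2)$ are multiplied by the same matrix on the right; by the scalar automorphisms of the line-bundle summands of $E$ and $E\otimes\mmon$ (giving additional torus rescalings of the individual entries); and by the off-diagonal pieces of $Aut(E)$ and $Aut(E\otimes\mmon)$ that are compatible with the ``$1$''-entries in rows $2$ and $3$. These off-diagonal pieces produce unipotent shifts of $(a_1,a_2)$ by elements of the form $b_1q_1+b_2q_2$ with $q_i\in H^0(\mo(2))$, together with analogous shifts in the $(b_1,b_2)$ slots.

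The second step uses that $\bt$ is a torus and $G$ is a connected affine algebraic group: a standard Bialynicki--Birula linearisation lets us choose, within each $\bt$-fixed $G$-orbit, a representative whose entries $b_i,a_i$ are simultaneous $\bt$-weight vectors, i.e.\ monomials in $x,y,z$. Then $\langle b_1,b_2\rangle\subset H^0(\mone)=\langle x,y,z\rangle$ must be one of the three coordinate 2-planes $\langle x,y\rangle$, $\langle x,z\rangle$, $\langle y,z\rangle$, and the $GL_2$ piece reduces us to the case where $b_1,b_2$ are the two coordinate monomials spanning that plane. For each such normalisation I would enumerate the pairs of degree-$3$ monomials $(a_1,a_2)$ modulo the residual symmetries (the column swap $b_1\leftrightarrow b_2$, the torus rescalings, and the unipotent shifts above). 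A direct case check should then produce the total count of $42$ fixed points.

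The principal obstacle is the bookkeeping for the unipotent part of $G$: these shifts identify large blocks of naively distinct monomial pairs, so one must fix a canonical representative in each equivalence class, for instance by choosing each $a_i$ to be a monomial not lying in $b_1 H^0(\mo(2))+b_2 H^0(\mo(2))$, and then verify that no two canonical representatives give the same orbit. Along the way one must check that the stability condition $kb_1\ne k'b_2$ is preserved and that each enumerated tuple genuinely corresponds to a point of $M_2$ rather than to a degenerate situation landing in the other stratum.
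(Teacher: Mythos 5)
Your overall strategy (put a $\bt$-fixed orbit into a normal form with monomial entries, then count orbits under the residual group) can in principle be made to work, and the reduction to the three coordinate $2$-planes $\langle b_1,b_2\rangle$ is correct; but as written there is a genuine gap: you never identify a complete invariant of the $G$-orbit of $(b_1,b_2,a_1,a_2)$, and without one the concluding ``direct case check'' is not actually set up, let alone shown to yield $42$. The fact the paper uses (quoting Lemma 5.4 of \cite{myself}) is that the orbit is determined exactly by the point $p$ cut out by $(b_1,b_2)$ together with the class of the determinant $\det(f)=b_1a_2-b_2a_1$ in $\p\bigl(H^0(I_p\otimes\mo_{\p^2}(4))\bigr)$, i.e.\ by a point of $\p^2$ and a quartic through it: all of the residual identifications (the $\mathrm{GL}_2$ on the columns and the unipotent shifts, which act as $(a_1,a_2)\mapsto(a_1+cb_1,a_2+cb_2)$ for $c\in H^0(\mo_{\p^2}(2))$ and hence leave $b_1a_2-b_2a_1$ unchanged) preserve this class, and conversely two stable quadruples with the same class lie in one orbit. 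Granting this, the count is immediate: $3$ torus-fixed points $p$, and for each of them the $14$ torus-fixed points of $\p\bigl(H^0(I_p\otimes\mo_{\p^2}(4))\bigr)$ (the $14$ degree-$4$ monomials other than the one not vanishing at $p$, whose $\bt$-weights are pairwise distinct), giving $3\times 14=42$. This is the missing idea in your write-up.

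Moreover, your one concrete proposal for resolving the bookkeeping --- choose each $a_i$ to be a monomial not lying in $b_1H^0(\mo_{\p^2}(2))+b_2H^0(\mo_{\p^2}(2))$ --- fails outright: for $(b_1,b_2)=(y,z)$ that subspace is the degree-$3$ part of the ideal of $[1,0,0]$ and contains every cubic monomial except $x^3$, so essentially no representatives would survive your normalization. This is a symptom of the absent invariant: the orbit equivalence is not transverse to the set of monomial pairs in the way your normal form assumes, and the object to normalize is the determinant class, not the individual entries $a_i$. (Your appeal to ``Bialynicki--Birula linearisation'' for choosing weight-vector entries is also not quite the right justification --- what is needed is that a $\bt$-fixed stable sheaf admits a $\bt$-equivariant structure, hence an equivariant resolution --- but that step is standard and is not where the argument breaks down.)
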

\begin{proof}The proof of Lemma 5.4 in \cite{myself} shows that points in $M_2$ 1-1 correspond to isomorphism classes of pairs $(R_f,[\omega_f])$, with $R_f\simeq I_1$ and $[det(f)]=[\omega_f]\in\p( H^0(R_f\otimes\mo_{\p^2}(4)))$.  Moreover, $\theta_{(t_1,t_2)}^{*}f\simeq f$ iff $\theta_{(t_1,t_2)}^{*}R_f\simeq R_f$ and $[\theta_{(t_1,t_2)}^{*}\omega_f]=[\omega_f]$.  Every $\bt$-fixed point in $M_2$ corresponds to a $\bt$-fixed point on $\p^2$ together with a $\bt$-fixed curve of degree 4 passing it.  Hence the lemma.
\end{proof}

\begin{lemma}\label{wfour}The $\bt$-fixed locus of $W^4$ consists of 120 points.
\end{lemma}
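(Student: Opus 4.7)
The plan is to apply Proposition \ref{bigopen} directly: a $\bt$-fixed point of $W^4$ corresponds to a $\bt$-fixed, torsion-free $Q_f\simeq I_3\otimes\mo_{\p^2}(2)$ (where $I_3$ is an ideal sheaf of $3$ points \emph{not} lying on a line, since here $\bar d=3$ and $(d-3)H=H$), together with a $\bt$-fixed class $[\sigma_f]\in\p(H^0(I_3\otimes\mo_{\p^2}(4)))$, i.e.\ a monomial quartic vanishing on the subscheme $Z$ defined by $I_3$. So the count splits as
\[
\#W^4{}^{\bt}=\sum_{[Z]\in\mathrm{Hilb}^{[3]}(\p^2)^{\bt},\,Z\not\subset\ell}\#\bigl(\p(H^0(I_Z\otimes\mo_{\p^2}(4)))^{\bt}\bigr).
\]

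First I would enumerate the $\bt$-fixed length-$3$ subschemes of $\p^2$. They are the monomial subschemes supported at the three torus-fixed points $[1,0,0],[0,1,0],[0,0,1]$, and at each fixed point the local structure is a monomial ideal of $\bc[u,v]$, encoded by a partition. Summing $p(n_1)p(n_2)p(n_3)$ over $n_1+n_2+n_3=3$ gives $3\cdot 3+6\cdot 2+1=22$ fixed subschemes, distributed into three strata: the $(3,0,0)$-type ($9$ subschemes), the $(2,1,0)$-type ($12$), and the $(1,1,1)$-type ($1$).

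Next I would separate those which lie on a line. A length-$3$ monomial subscheme lies on a coordinate line iff its ideal contains one of $x,y,z$. Going through the cases: in the $(3,0,0)$ stratum, the partitions $(3)$ and $(1,1,1)$ both give ideals containing a coordinate, while $(2,1)$ gives $\mathfrak m^2$, which does not — so $2$ out of $3$ at each vertex, i.e.\ $6$ collinear. In the $(2,1,0)$ stratum, for the double point one of the two direction choices aligns with the line through the other simple point and the other does not — so $6$ collinear out of $12$. The $(1,1,1)$-configuration is clearly non-collinear. This leaves $1+3+6=10$ admissible $\bt$-fixed $I_3$.

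Finally, for each admissible $I_Z$, count the monomial quartics $x^ay^bz^c$ ($a+b+c=4$) that lie in $H^0(I_Z\otimes\mo_{\p^2}(4))$; this amounts to imposing, at each support point, membership in the local monomial ideal — purely combinatorial conditions of the form ``$b\geq 1$ or $c\geq 2$'', etc. Running through the three cases one finds $12$ monomials in every single case: $12$ from the $(1,1,1)$-configuration, $3\cdot 12=36$ from the $(3,0,0)$-configurations with local ideal $\mathfrak m^2$, and $6\cdot 12=72$ from the non-collinear $(2,1,0)$-configurations, totalling $12+36+72=120$.

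No genuinely hard step arises: the main content is the bookkeeping in the last paragraph. The only place one must be careful is the ``not on a line'' condition, since one might otherwise overcount by including $I_3$'s whose $Q_f$ sits in an unwanted exact sequence (equivalently, for which the pair $(E,f)$ does not lie in $W^4$). Once the $10$ admissible subschemes are identified, the weight-space count in $H^0(\mo_{\p^2}(4))$ is routine and gives $120$, matching the statement.
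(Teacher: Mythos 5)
Your proposal is correct and follows exactly the same route as the paper's (very terse) proof: identify the $10$ torus-fixed ideal sheaves $I_3$ of three points not on a line, and for each count the $12$ fixed points of $\p(H^0(I_3\otimes\mo_{\p^2}(4)))$, which are the $h^0(I_3\otimes\mo_{\p^2}(4))=15-3=12$ monomial quartics since the weights of $H^0(\mo_{\p^2}(4))$ are multiplicity-free. You have merely supplied the combinatorial bookkeeping (the $22=9+12+1$ fixed subschemes and the exclusion of the $12$ collinear ones) that the paper leaves implicit.
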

\begin{proof}$\bar{d}=3$ for $d=4$.  There are 10 $\bt$-fixed ideal sheaves $I_3$ of 3-points not lying on a curve in $|H|$.  For each $I_3$ there are 12 $\bt$-fixed points in $\p(H^0(I_3(4)))$.  Hence the lemma. 
\end{proof}

\begin{lemma}\label{tfour}The $\bt$-fixed locus of $M_1-W^4$ consists of 24 points and 6 affine lines $\mathbb{A}^1$.
\end{lemma}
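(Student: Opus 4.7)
The plan is to mirror the approach of Lemmas \ref{mtfour} and \ref{wfour}: stratify $M_1 - W^4$ into $\bt$-invariant pieces according to the torsion type of the auxiliary sheaf $Q_f$, and on each piece reduce the $\bt$-fixed-point count to enumerating $\bt$-fixed configurations of lines, points and sections in $\p^2$.

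First I would describe the locus concretely. For $(E,f)\in M_1$ in the normal form (\ref{matrix}), $Q_f$ is the cokernel of $f_{B^t}\colon\mmon^{\oplus 2}\to\mo_{\p^2}^{\oplus 3}$, and it has nontrivial torsion precisely when the three $2\times 2$ minors of the $3\times 2$ matrix $B$ (each a section of $\mo_{\p^2}(2)$) share a common linear factor $\ell$. In that case the torsion of $Q_f$ is supported on the line $L=\{\ell=0\}$, and as in (\ref{dus}) the isomorphism class of $F=\mathrm{coker}(f)$ is still recovered from the pair $(Q_f,[\sigma_f])$. Because $L$ is intrinsic to $F$, a $\bt$-fixed pair in $M_1-W^4$ forces $L$ to be $\bt$-fixed, so $\ell\in\{x,y,z\}$: there are three essentially symmetric cases.

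Second, on each case (say $\ell=x$) I would use the residual group of automorphisms of $E=\mo_{\p^2}\oplus\mmon^{\oplus 3}$, acting on matrix representatives of $f$, to put $B$ in a standard form that makes the common factor $x$ explicit, for instance first column proportional to $(x,0,0)^{t}$. Imposing $\bt$-equivariance then forces every remaining entry of $A$ and $B$, in a suitable $\bt$-eigenbasis of $E$, to be a single monomial in $x,y,z$ of the appropriate degree. A further substratification by the length of the torsion of $Q_f$ (matching the stratification of $\widetilde{W}^4-W^4$ used in \cite{myself} for the Grothendieck class) reduces the problem to checking finitely many monomial matrices against the stability condition and the residual torus action.

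Third, I would collect the count. Most monomial matrices produce isolated $\bt$-fixed points after quotienting by the residual torus; the exceptions occur when a residual $\bc^{*}$-subgroup of $\mathrm{Aut}(E)$ acts trivially on a one-dimensional subspace of $H^0(\mo_{\p^2}(n))$ sitting in some entry of $A$, so that a whole $\mathbb{A}^1$ worth of pairs remain non-isomorphic and $\bt$-fixed. The expected distribution $24+6\mathbb{A}^1$ then combines with the $120$ points of Lemma \ref{wfour} and the $42$ points of Lemma \ref{mtfour} into $186$ points and $6$ affine lines, matching Theorem \ref{flfour}. The main obstacle is precisely this last step: verifying that each one-parameter $\bt$-fixed family is a genuine $\mathbb{A}^1$ rather than a $\p^1$ whose missing boundary point already lies in $W^4$ or $M_2$ and is counted in the earlier lemmas. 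Controlling these closures (and ensuring the remaining semistability condition on $(E,f)$ is open on each line) is the delicate part; everything else is monomial bookkeeping.
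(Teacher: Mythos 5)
Your setup is the right one --- the torsion of $Q_f$ is supported on a line of $\p^2$ that must itself be $\bt$-fixed, giving three symmetric cases --- but the proposal stops exactly where the proof has to start. You never derive the split ``$8$ points and $2$ affine lines per line''; you only assert ``the expected distribution $24+6\,\mathbb{A}^1$'' and then explicitly flag as an unresolved ``main obstacle'' the question of whether each one-parameter fixed family is a genuine $\mathbb{A}^1$ or a punctured $\p^1$ whose missing point is counted elsewhere. That question is the entire content of the lemma, so as written this is a gap, not a proof.

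The paper's resolution is concrete and worth internalizing. For torsion supported on $\{x=0\}$ the sheaf $Q_f$ is unique up to isomorphism (the extension $0\to\mo_H(-1)\to Q_f\to\mo_{\p^2}(1)\to 0$ is non-split with one-dimensional extension group), and the fibre of $M_1-W^4$ over this choice is $\p(H^0(Q_f\otimes\mo_{\p^2}(2)))$ minus the forbidden subspace $\p(\bar{\jmath}(H^0(\mo_H(1))))$, where $\bar{\jmath},\bar{p}$ come from taking global sections of the twisted extension as in (\ref{tqfgs}). One then decomposes the $12$-dimensional space $H^0(Q_f\otimes\mo_{\p^2}(2))$ into $\theta^{*}_{(t_1,t_2)}$-eigenspaces: since $\theta^{*}_{(t_1,t_2)}\cdot\bar{\jmath}=t_1t_2\bar{\jmath}$, the two sections $y|_{\{x=0\}}$ and $z|_{\{x=0\}}$ acquire eigenvalues $t_1^2t_2$ and $t_1t_2^2$, which are precisely the eigenvalues of the monomials $y^2z$ and $yz^2$ in $H^0(\mo_{\p^2}(3))$. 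Hence there are $8$ one-dimensional eigenspaces (isolated fixed points, automatically outside the forbidden locus) and $2$ two-dimensional eigenspaces, each contributing an entire fixed $\p^1$ that meets $\p(\bar{\jmath}(H^0(\mo_H(1))))$ in exactly one point. The removed point is not a point of $M(4,1)$ in disguise --- it corresponds to $\sigma_f$ factoring through the torsion, i.e.\ $\det(f)=0$ --- so there is nothing to ``control'' about closures meeting $W^4$ or $M_2$: each component is literally a $\p^1$ minus one point inside the fibre, hence an $\mathbb{A}^1$. Multiplying by the three choices of line gives $24$ points and $6$ affine lines. Your matrix-normal-form bookkeeping could in principle reproduce this, but without the eigenspace computation --- in particular without noticing exactly which eigenvalues collide between $H^0(\mo_H(1))$ and $H^0(\mo_{\p^2}(3))$ --- you cannot obtain the $8+2$ split, and the count remains unproven.
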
  
\begin{proof}Every pair $(E,f)$ in $M_1-W^4$ can be assigned uniquely to a pair $(Q_f,[\sigma_f])$ such that $Q_f$ lies in the following non-split exact sequence
\begin{equation}\label{tqf}\xymatrix{0\ar[r]&\mo_{H}(-1)\ar[r]^{~~\jmath} &Q_f\ar[r]^{p~~~} &\mo_{\p^2}(1)\ar[r] &0,}\end{equation}
where $\mo_H(-1):=\mo_C\otimes\mo_{\p^2}(1)\simeq \mo_{\p^1}(-1)$ for some curve $C\in|H|$.  $Q_f$ is unique up to isomorphism if the support of its torsion $\mo_H(-1)$ is given.  

Take global sections of (\ref{tqf})$\otimes\mo_{\p^2}(2)$ and we get 
\begin{equation}\label{tqfgs}\xymatrix@C=0.7cm{0\ar[r]&H^0(\mo_{H}(1))\ar[r]^{\bar{\jmath}~~~~} &H^0(Q_f\otimes\mo_{\p^2}(2))\ar[r]^{~~~\bar{p}} &H^0(\mo_{\p^2}(3))\ar[r] &0,}\end{equation}
and $[\sigma_f]\in \p(H^0(Q_f\otimes\mo_{\p^2}(2)))$ not contained in $\p(\bar{\jmath}(H^0(\mo_{H}(1))))$. 

$(E,f)$ is $\bt$-fixed $\Leftrightarrow$ $Q_f$ and $[\sigma_f]$ are both $\bt$-fixed $\Leftrightarrow$ the support of $\mo_H(-1)$ in (\ref{tqf}) and $[\sigma_f]$ are both $\bt$-fixed.  

Let $Q_f$ be $\bt$-fixed.  Assume that the torsion of $Q_f$ is $\mo_{\{x=0\}}(-1)$.  Every element $\gamma$ in $H^0(Q_f\otimes\mo_{\p^2}(2))$ can be assigned to a matrix $N_{\gamma}$ of form (\ref{matrix}), more precisely, write $\gamma\sim\underline{a^{\gamma}}:=(a_1^{\gamma},a_2^{\gamma},a_3^{\gamma})$ with $a^{\gamma}_i\in H^0(\mo_{\p^2}(2))$, $N_{\gamma}$ can be written as follows.
\begin{equation}\label{tqmqf}N_{\gamma}:=\left(\begin{array}{cccc}0&1&0&0\\a^{\gamma}_1&0&x&0\\a^{\gamma}_2&0&0&x\\a^{\gamma}_3&0&y&z\end{array}\right),
\end{equation}
The map $\bar{p}$ in (\ref{tqfgs}) is defined by $\bar{p}(\gamma)=\frac{det(N_{\gamma})}x.$
  
For every $\alpha\in H^0(\mo_{\{x=0\}}(1))$, $N_{\alpha}=N_{\bar{\jmath}(\alpha)}$ is defined by\begin{equation}\label{tqmqft}N_{\alpha}:=\left(\begin{array}{cccc}0&1&0&0\\ \alpha z&0&x&0\\-\alpha y&0&0&x\\0&0&y&z\end{array}\right).
\end{equation}

$\forall(t_1,t_2)\in\bt$, define an element $\gamma\in H^0(Q_f\otimes\mo_{\p^2}(2))$ to be an eigenvector of $\theta_{(t_1,t_2)}^{*}$ with eigenvalue $\lambda$, if $\exists~\underline{a^{\gamma}}\sim\gamma$ such that
\[\theta^{*}_{(t_1,t_2)}N_{\underline{a^{\gamma}}}=Diag(1,\frac1{t_1},\frac1{t_2},1)\cdot  N_{\lambda\underline{a^{\gamma}}}\cdot Diag(1,1,t_1,t_2).\]  

Let $\alpha\in H^0(\mo_{\{x=0\}}(1))$ satisfying $\theta_{(t_1,t_2)}^{*}\alpha=\lambda_{\alpha}\alpha$, then $\bar{\jmath}(\alpha)$ is of eigenvalue $t_1t_2\lambda_{\alpha}$, in other words, $\theta_{(t_1,t_2)}^{*}\cdot\bar{\jmath}=t_1t_2\bar{\jmath}$.  Let $\gamma\in H^0(Q_f\otimes\mo_{\p^2}(2))$ be an eigenvector with eigenvalue $\lambda$, then $\theta_{(t_1,t_2)}^{*}(\bar{p}(\gamma))=\lambda\bar{p}(\gamma)\in H^0(\mo_{\p^2}(3))$.

$H^0(Q_f\otimes\mo_{\p^2}(2))$ can be decomposed into a direct sum of 10 eigen-subspaces of $\theta^{*}_{(t_1,t_2)}$.  There are 8 one-dimensional eigen-subspaces spanned by $\gamma^{(i,j)}$ with $\bar{p}(\gamma^{(i,j)})=x^{3-i-j}y^iz^j$ of eigenvalues $t_1^it_2^j,$ for $i,j\geq0$, $i+j\leq3$ and $(i,j)$ not equal to $(1,2)$ or $(2,1)$.  These subspaces provide 8 $\bt$-fixed points in $\p(H^0(Q_f\otimes\mo_{\p^2}(2)))$ not contained in $\p(H^0(\mo_{\{x=0\}}(1)))$.  There are 2 two-dimensional eigen-subspaces spanned by $\{\bar{\jmath}(y|_{\{x=0\}}),\gamma^{(2,1)}\}$ and $\{\bar{\jmath}(z|_{\{x=0\}}),\gamma^{(1,2)}\}$ of eigenvalues $t^2_1t_2$ and $t^2_2t_1$ respectively (notice that $\theta_{(t_1,t_2)}^{*}\cdot\bar{\jmath}=t_1t_2\bar{\jmath}$).  These subspaces provide 2 $\bt$-fixed projective lines $\p^1$ in $\p(H^0(Q_f\otimes\mo_{\p^2}(2)))$, either of which intersects $\p(H^0(\mo_{\{x=0\}}(1)))$ at one point, hence we get 2 affine lines $\mathbb{A}^1$ for fixed $[\sigma_f]$.  The same holds for $Q_f$ with torsion $\mo_{\{y=0\}}(-1)$ or $\mo_{\{z=0\}}(-1)$.  Hence we in total have 24 points and 6 affine lines. 
\end{proof}
\begin{proof}[Proof of Theorem \ref{flfour}]Combine Lemma \ref{mtfour}, Lemma \ref{wfour} and Lemma \ref{tfour}.\end{proof}

\section{$\bt$-fixed points in $M(5,1)$ and $M(5,2)$.}
Up to isomorphism $M(5,1)$ and $M(5,2)$ are the only two moduli spaces with $d=5$ such that there is no strictly semistable sheaves.  In this section we prove the following  theorem.
\begin{thm}\label{mtfi}Both $M(5,1)$ and $M(5,2)$ admit affine pavings.  Moreover, 

1, the $\bt$-fixed loci of $M(5,1)$ can be decomposed into the union of 1545 points,  144 affine lines $\mathbb{A}^1$ and 6 affine planes $\mathbb{A}^2$;

2, the $\bt$-fixed loci of $M(5,2)$ can be decomposed into the union of 1506 points,  186 affine lines $\mathbb{A}^1$ and 3 affine planes $\mathbb{A}^2$.

\end{thm}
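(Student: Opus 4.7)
The plan is to follow the stratification method used for $M(4,1)$ in Section 3, applied to each of $M(5,1)$ and $M(5,2)$. I would decompose each moduli space into $\bt$-invariant constructible strata indexed by the isomorphism type of $E=\bigoplus\mo_{\p^2}(n_i)$, handle each stratum separately, and sum the contributions at the end. The stability conditions together with the $\chi$-normalization force a finite list of possible $E$'s on each moduli, and each stratum carries an explicit block-matrix form of $f$ analogous to (\ref{matrix}) and (\ref{romf}).

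For the open stratum $W^5\subset M(5,1)$, Proposition \ref{bigopen} reduces the count to pairs $(I_6,[\sigma_f])$ with $\bar{d}=\tfrac{(5-1)(5-2)}{2}=6$, where $I_6$ is a $\bt$-fixed ideal of six points not lying on a cubic and $[\sigma_f]$ is a $\bt$-fixed class in $\p(H^0(I_6\otimes\mo_{\p^2}(5)))$. The $\bt$-fixed ideals correspond to partitions of $6$ placed at one of the three torus-fixed points of $\p^2$ (with the no-cubic condition excluding a few partitions), and each resulting section space decomposes into $1$-dimensional weight spaces, so only isolated fixed points appear. Consequently every $\mathbb{A}^1$ and $\mathbb{A}^2$ in the final statement must come from a lower stratum, flagging where the delicate analysis lies.

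For the complementary strata I would proceed in parallel with Lemma \ref{tfour} and Lemma \ref{mtfour}. On each stratum, the pair $(E,f)$ is encoded by an auxiliary sheaf $Q$ (a kernel or residue of a block of $f$) together with a section class $[\sigma_f]\in\p(H^0(Q\otimes\mo_{\p^2}(k)))$ for appropriate $k$; the sheaf $Q$ carries at most one torsion summand $\mo_{H}(m)$ supported on a $\bt$-fixed coordinate line. Having fixed a $\bt$-invariant torsion support, I would decompose $H^0(Q\otimes\mo_{\p^2}(k))$ into $\bt$-eigenspaces: $1$-dimensional weight spaces give isolated fixed points; $2$-dimensional spaces give a fixed $\p^1$ that, after removing its intersection with the ``bad'' subspace of torsion-type sections of lower stratum type (as in (\ref{tqmqft})), becomes an $\mathbb{A}^1$; $3$-dimensional spaces similarly yield an $\mathbb{A}^2$ after subtracting a $\p^1$. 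Running through the finitely many fixed supports on each stratum and tabulating contributions should produce the claimed totals. For $M(5,2)$ the same scheme applies but with a different starting matrix form dictated by the Euler characteristic shift.

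The main obstacle is bookkeeping rather than a single conceptual difficulty: the number of strata for $d=5$ is larger than for $d=4$, each stratum has its own $Q$ with its own weight decomposition, and I must correctly identify on which stratum a $3$-dimensional weight space first appears in order to obtain the $\mathbb{A}^2$ cells, while verifying that the subtracted torsion locus is exactly a $\p^1$ so that the complement is genuinely affine. A subtler issue, highlighted by Remark \ref{otnis}, is that $[M(5,1)]=[M(5,2)]$ in the Grothendieck ring despite the stratifications producing the inequivalent cell counts $(1545,144,6)$ and $(1506,186,3)$; the two counts must satisfy $1545+144\,\bl+6\,\bl^2=1506+186\,\bl+3\,\bl^2$ at the motive level, which provides a strong internal consistency check on the eigenspace analysis and on the identification of the ``bad'' loci to be subtracted.
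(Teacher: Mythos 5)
Your overall strategy --- stratify by the splitting type of $E$, encode each stratum by an auxiliary sheaf together with a section class, and decompose the relevant section spaces into $\bt$-weight spaces --- is the paper's strategy, and it does account for the isolated fixed points and most of the affine lines. But there is a genuine gap in your mechanism for producing the $\mathbb{A}^2$ cells. You predict they come from $3$-dimensional weight spaces inside some $\p(H^0(Q\otimes\mo_{\p^2}(k)))$ with a $\p^1$ of torsion-type sections removed; in fact no $3$-dimensional weight space occurs in any of the relevant section spaces. The six $\mathbb{A}^2$'s in $M(5,1)$ live in the stratum $\Pi_2\subset \widetilde{W}^5\setminus W^5$ (where $T_f\simeq\mo_H(-1)$ and $Q_f^{tf}\simeq I_2\otimes\mo_{\p^2}(2)$): there, unlike in Lemma \ref{tfour}, the auxiliary sheaf $Q_f$ is \emph{not} determined by the support of its torsion, and the $\bt$-fixed $Q_f$ themselves form $24$ points and $3$ affine lines; the $\mathbb{A}^2$'s are the products of those $3$ affine lines of fixed $Q_f$ with the $2$ affine lines of fixed $[\omega_f]$ over each. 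Your assumption that one only runs ``through the finitely many fixed supports'' misses exactly this. Likewise, the three $\mathbb{A}^2$'s in $M(5,2)$ arise in the stratum $\Xi_1$, where the parameter space is a Grassmannian $Gr(2,H^0(E_0(2)))\simeq Gr(2,15)$ rather than a projective space of sections: its fixed locus contains three copies of $\p^1\times\p^1$ coming from pairs of $2$-dimensional weight spaces, and removing the diagonal where $\det(f)=0$ leaves $\mathbb{A}^1\cup\mathbb{A}^2$. Neither source of $\mathbb{A}^2$'s is visible in your framework, so the plan as written cannot reach the stated totals.

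Two further points. Your ``consistency check'' $1545+144\,\bl+6\,\bl^2=1506+186\,\bl+3\,\bl^2$ is false: $1,\bl,\bl^2$ are linearly independent in the Grothendieck ring, and Remark \ref{otnis} states precisely that the two fixed loci are \emph{not} equal. The identity $[M(5,1)]=[M(5,2)]$ concerns the moduli spaces, which are rebuilt from the fixed loci via Bia{\l}ynicki--Birula cells carrying different powers of $\bl$; the only check available at the level of fixed loci is on Euler numbers, $1545+144+6=1506+186+3=1695$. Finally, in $W^5$ the relevant condition is that the $\bar d=6$ points do not lie on a \emph{conic} (class $(d-3)H=2H$), not a cubic, and the $\bt$-fixed ideals are monomial ideals distributed over all three fixed points of $\p^2$ (triples of partitions of total size $6$), not single partitions at one point; the paper sidesteps this enumeration entirely by deducing $e(W^5)=480$ from the known Euler number $1695$ of $M(5,1)$ minus the contributions of the other strata.
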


\begin{flushleft}{\textbf{$\lozenge$ Computation for $[M(5,1)]$}}\end{flushleft}
$M(5,1)$ can be stratified into the following three $\bt$-invariant strata.
\begin{eqnarray} & M_1:=&\widetilde{W}^5=\{[(E,f)]\in M(5,1)|E\simeq\mo_{\p^2}\oplus\mmon^{\oplus 4}\};\nonumber\\& M_2:=&\{[(E,f)]\in M(5,1)|E\simeq\mo_{\p^2}^{\oplus 2}\oplus\mmon^{\oplus 2}\oplus\mo_{\p^2}(-2)\};\nonumber\\& M_3:=&\{[(E,f)]\in M(5,1)|E\simeq\mone\oplus\mo_{\p^2}\oplus\mmon\oplus\mo_{\p^2}(-2)^{\oplus 2}\}.\nonumber
\end{eqnarray}

For a pair $(E,f)\in M_3$, $f$ can be represented by the following matrix
\begin{equation}\label{romfi}\left(\begin{array}{ccccc}0&1&0&0&0\\0&0&1&0&0\\0&0&0&1&0\\a_1&0&0&0&b_1\\ a_2 &0&0&0&b_2\end{array}\right),
\end{equation}
where $b_i\in H^0(\mone)$ and $a_i\in H^0(\mo_{\p^2}(4))$.  The stability condition for such $(E,f)$ is equivalent to $kb_1\neq k'b_2$ for any $(k,k')\in\bc^2-\{0\}$.  
\begin{lemma}\label{mhfive}The $\bt$-fixed locus of $M_3$ consists of 60 points.
\end{lemma}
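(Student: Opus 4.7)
The plan is to imitate the proof of Lemma \ref{mtfour}, associating to each $(E,f)\in M_3$ a pair of invariants $(R_f,[\omega_f])$ whose $\bt$-fixed data can be counted directly.  First I would use the three identity entries in (\ref{romfi}) to reduce the presentation: after quotienting the second, third, and fourth summands of $E$ by the images of the first three source summands under $f$, one finds that $coker(f)$ is isomorphic to the cokernel of the $2\times 2$ map
\[
\bar f \;:=\; \begin{pmatrix} a_1 & b_1 \\ a_2 & b_2 \end{pmatrix}\;:\;\mo_{\p^2}(-3)^{\oplus 2}\;\longrightarrow\;\mone\oplus\mo_{\p^2}(-2).
\]
The stability condition $kb_1\neq k'b_2$ for $(k,k')\in\bc^2-\{0\}$ exactly says that $b_1,b_2\in H^0(\mone)$ are linearly independent linear forms, so their common zero $p:=V(b_1,b_2)$ is a single reduced point of $\p^2$.

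Next, by the same book-keeping as in the proof of Lemma 5.4 of \cite{myself}, the pair $(E,f)\in M_3$ is determined up to isomorphism by $R_f\simeq I_p\simeq I_1$ together with the class
\[
[\omega_f]:=[det(f)]=[a_2b_1-a_1b_2]\;\in\;\p\bigl(H^0(R_f\otimes\mo_{\p^2}(5))\bigr)=\p\bigl(H^0(I_p\otimes\mo_{\p^2}(5))\bigr),
\]
noting that $det(f)$ visibly vanishes at $p$, so indeed lies in the subspace of sections of $\mo_{\p^2}(5)$ vanishing there.  Since $\theta_{(t_1,t_2)}^{*}$ acts on the data via $(b_i,a_j)\mapsto(\theta^{*}_{(t_1,t_2)}b_i,\theta^{*}_{(t_1,t_2)}a_j)$, we see $(E,f)$ is $\bt$-fixed if and only if both $R_f$ and $[\omega_f]$ are $\bt$-fixed.

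To finish I count.  The $\bt$-fixed ideal sheaves $I_p$ correspond to the three coordinate fixed points $[1,0,0]$, $[0,1,0]$, $[0,0,1]$ of $\p^2$.  For each such $p$, the vector space $H^0(I_p\otimes\mo_{\p^2}(5))$ has dimension $\binom{7}{2}-1=20$ and decomposes into $20$ one-dimensional $\bt$-eigenspaces, one for each degree-$5$ monomial in $x,y,z$ vanishing at $p$.  Hence each $p$ contributes $20$ $\bt$-fixed points in $\p(H^0(I_p\otimes\mo_{\p^2}(5)))$, giving $3\cdot 20=60$ $\bt$-fixed points in $M_3$ in total.  The only non-routine step is justifying the 1-1 correspondence $(E,f)\leftrightarrow(R_f,[\omega_f])$: the identity block of (\ref{romfi}) makes it plausible, and the careful argument is the direct analog of Lemma 5.4 of \cite{myself}.
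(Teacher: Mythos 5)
Your proposal is correct and follows essentially the same route as the paper: identify $M_3$ with a projective bundle over $Hilb^{[1]}(\p^2)$ with fiber $\p(H^0(I_1\otimes\mo_{\p^2}(5)))\simeq\p^{19}$ (via the reduction to the pair $(I_p,[\det(f)])$, analogous to Lemma \ref{mtfour}), and then count $3\times 20=60$ fixed points. The eigenspace decomposition of $H^0(I_p\otimes\mo_{\p^2}(5))$ into 20 distinct characters is exactly the computation the paper leaves implicit.
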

\begin{proof}The proof is analogous to Lemma \ref{mtfour}.  $M_3$ is isomorphic to a projective bundle over $Hilb^{[1]}(\p^2)$ with fibers isomorphic to $\p(H^0(I_1(5)))\simeq \p^{19}$. Every $\bt$-fixed point in $M_2$ corresponds to a $\bt$-fixed point on $\p^2$ together with a $\bt$-fixed curve of degree 5 passing it.  Hence the lemma.
\end{proof}
We follow the notations in \cite{myself} and stratify $M_2$ into two $\bt$-invariant strata as follows.
\begin{eqnarray} & M_2^s:=&\{[(E,f)]\in M_2|f|_{\mo_{\p^2}^{\oplus 2}\otimes\mmon} is~surjective ~onto ~\mmon^{\oplus 2}\};\nonumber\\& M_2^c:=&M_2-M_2^s.\nonumber
\end{eqnarray}

For a pair $(E,f)\in M_2^s$, $f$ can be represented by the following matrix
\begin{equation}\label{romfit}\left(\begin{array}{ccccc}0&0&1&0&0\\0&0&0&1&0\\0&0&0&0&1\\b_1&b_2&0&0&0\\ a_1 &a_2&0&0&0\end{array}\right),
\end{equation}
where $b_i\in H^0(\mo_{\p^2}(2))$ and $a_i\in H^0(\mo_{\p^2}(3))$.  Such $(E,f)$ is stable if and only if $kb_1\neq k'b_2$ for any $(k,k')\in\bc^2-\{0\}$.  
\begin{lemma}\label{mtfive}The $\bt$-fixed locus of $M_2^s$ consists of 201 points and 27 affine lines.
\end{lemma}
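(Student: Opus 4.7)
The plan is to adapt to $M_2^s$ the strategy of the proof of Lemma \ref{tfour}. The first step is to identify the moduli data of a pair $(E,f)\in M_2^s$. Having put $f$ in the normal form (\ref{romfit}), such a pair is specified by the pencil $V=\langle b_1,b_2\rangle\subset H^0(\mo_{\p^2}(2))$ (whose dimension is forced to be $2$ by the stability condition $kb_1\neq k'b_2$) together with the row $(a_1,a_2)\in H^0(\mo_{\p^2}(3))^{\oplus 2}$, taken modulo the residual subgroup of $\text{Aut}(E)\times \text{Aut}(E)$ preserving the form (\ref{romfit}). After the identity block is pinned down, this residual subgroup is generated by a diagonal $GL_2(\bc)$-action acting simultaneously on the two $\mo_{\p^2}$-summands and the two $\mmon$-summands, by scalars on the $\mo_{\p^2}(-2)$-summand, and by the off-diagonal morphisms $\mmon^{\oplus 2}\to\mo_{\p^2}^{\oplus 2}$ and $\mo_{\p^2}(-2)\to\mmon^{\oplus 2}$, which act on $(a_1,a_2)$ by shifting them by $H^0(\mone)$-linear combinations of $(b_1,b_2)$.

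The second step is to classify the $\bt$-invariant pencils. Since $H^0(\mo_{\p^2}(2))$ decomposes as a direct sum of six one-dimensional $\bt$-weight subspaces spanned by the pairwise-distinct-weight monomials $x^2,y^2,z^2,xy,yz,zx$, the $\bt$-invariant $2$-dimensional subspaces are exactly the sums of two of these, giving $\binom{6}{2}=15$ choices of $V$. For each such $V$, I would fix a monomial basis $(b_1,b_2)$, which is determined up to the diagonal subtorus of the residual $GL_2(\bc)$ since the two chosen monomials carry distinct $\bt$-weights.

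The third step is, for each $\bt$-invariant $V$, to decompose the space of admissible $(a_1,a_2)$ into $\theta^{*}_{(t_1,t_2)}$-eigen-subspaces modulo the residual gauge, in the spirit of the eigendecomposition carried out in the proof of Lemma \ref{tfour}. Since $H^0(\mo_{\p^2}(3))$ is the sum of ten one-dimensional $\bt$-weight subspaces spanned by the monomials $x^iy^jz^k$ with $i+j+k=3$, the space $H^0(\mo_{\p^2}(3))^{\oplus 2}$ splits into weight subspaces whose linearizations on the two $\mo_{\p^2}$-summands are tied to the weights of $b_1,b_2$ by the $\bt$-equivariance required of the residual gauge transformation. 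Most weight subspaces produce isolated $\bt$-fixed points in the projectivization; the two-dimensional eigen-subspaces not entirely absorbed by the off-diagonal $\mmon^{\oplus 2}\to\mo_{\p^2}^{\oplus 2}$ gauge (the analogue of $\bar{\jmath}$ in Lemma \ref{tfour}) contribute projective lines which, after cutting by the image of this gauge, leave affine line strata.

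The main obstacle will be the careful bookkeeping of this residual gauge action: the $H^0(\mone)$-shifts of $(a_1,a_2)$ by multiples of $(b_1,b_2)$ can collapse apparent $\p^1$-families to points or cut them to $\mathbb{A}^1$'s, and the enumeration is sensitive to how the cubic monomial weights pair with the two quadratic weights chosen for $V$, so each of the $15$ invariant pencils must be analyzed separately. Once these per-pencil contributions are correctly tallied, the sum should yield the asserted $201$ isolated fixed points and $27$ affine lines.
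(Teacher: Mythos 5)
Your setup is the same as the paper's: the paper also parametrizes $M_2^s$ by the pencil $\langle b_1,b_2\rangle$ (a point of $Gr(2,h^0(\mo_{\p^2}(2)))$, with $\binom{6}{2}=15$ torus-fixed points) together with the residue of $(a_1,a_2)$ modulo the gauge shifts by $H^0(\mone)$-combinations of $(b_1,b_2)$, and your description of that residual group is correct. The difference is that the paper packages the quotient intrinsically: setting $R_f=\mathrm{coker}\bigl((b_1,b_2):\mo_{\p^2}(-2)\to\mo_{\p^2}^{\oplus 2}\bigr)$, the class of $(a_1,a_2)$ modulo the gauge is exactly $[\omega_f]\in\p(H^0(R_f\otimes\mo_{\p^2}(3)))$ minus the sections supported on the torsion of $R_f$. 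This is not just bookkeeping convenience; it is what turns the ``per-pencil analysis'' you defer into a finite computation.

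The genuine gap is that you never perform the count, and you do not identify the dichotomy that organizes it. Among the $15$ invariant pencils, $6$ have coprime monomial generators (e.g.\ $\langle x^2,yz\rangle$), for which $R_f$ is torsion free, $h^0(R_f\otimes\mo_{\p^2}(3))=17$ with all weights distinct, and one gets $6\times 17=102$ isolated fixed points and no lines; the other $9$ have a common linear factor (e.g.\ $\langle x^2,xy\rangle$), for which $R_f$ sits in a non-split extension $0\to\mo_H(-1)\to R_f\to I_1\otimes\mo_{\p^2}(1)\to 0$, and the Lemma \ref{tfour} mechanism (two-dimensional weight spaces meeting $\p(H^0(\mo_H(-1)\otimes\mo_{\p^2}(3)))$ in a point) yields $11$ points and $3$ affine lines each, i.e.\ $99$ points and $27$ lines. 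Only by separating these two cases does one reach $201$ and $27$; your proposal asserts the totals without ever exhibiting which pencils produce lines or why exactly three per pencil, so as written it is a correct strategy but not a proof of the stated count.
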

\begin{proof}We have the following diagram
\begin{equation}\label{fficd}\xymatrix{ 0\ar[r]&\mo_{\p^2}(-2)\ar[r]^{(b_1,b_2)}&\mo_{\p^2}^{\oplus 2}\ar[r]^{f_r}& R_f\ar[r]&0\\
&&\mo_{\p^2}(-3)\ar[u]^{(a_1,a_2)}\ar[ru]_{\omega_f:=f_r\circ(a_1,a_2)}&&}.\end{equation}
Since $kb_1\neq k'b_2,\forall(k,k')\in\bc^2-\{0\}$,  the isomorphism classes of $R_f$ 1-1 correspond to points  in $Gr(2,h^0(\mo_{\p^2}(2))).$  $\bt$-fixed points in $M_2^s$ correspond to $\bt$-fixed $R_f$, which give $\bt$-fixed points in $Gr(2,h^0(\mo_{\p^2}(2)))$, and $\bt$-fixed $[\omega_f]\in\p(H^0(R_f\otimes\mo_{\p^2}(3)))-\p(H^0(R_f^t\otimes\mo_{\p^2}(3)))$ with $R_f^t$ the torsion of $R_f.$

There are 6 $\bt$-fixed torsion-free $R_f$ where we can choose $b_i$ $\bt$-fixed and coprime to each other.  For each such $R_f$,  the number of $\bt$-fixed $[\omega_f]$ is $h^0(R_f\otimes\mo_{\p^2}(3))=17$.  Hence we get 102 points.

There are 9 $\bt$-fixed $R_f$ containing torsion where we can choose $b_i$ $\bt$-fixed but not coprime to each other.  Each such $R_f$ lies in the following non-split sequence.
\begin{equation}\label{trfft}0\ra\mo_{H}(-1)\ra R_f\ra I_1\otimes\mo_{\p^2}(1)\ra 0,
\end{equation}
  
Use the argument similar to Lemma \ref{tfour}, we see that the locus of $\bt$-fixed $[\omega_f]$ consists of 11 points and 3 affine lines for each of those 9 $\bt$-fixed $R_f$.  Hence in total we have 201 points and 27 fixed affine lines.  
\end{proof}
  
For a pair $(E,f)\in M_2^c$, $f$ can be represented by the following matrix
\begin{equation}\label{romfitc}\left(\begin{array}{ccccc}b_1&b_2&0&0&0\\0&0&1&0&0\\a_1&a_2&0&b_3&0\\0&0&0&0&1\\ e_1 &e_2&0&a_3&0\end{array}\right),
\end{equation}
where $b_i\in H^0(\mo_{\p^2}(1))$, $a_i\in H^0(\mo_{\p^2}(2))$ and $e_i\in H^0(\mo_{\p^2}(3))$.  $(E,f)$ is stable if and only if $kb_1\neq k'b_2,\forall (k,k')\in\bc^2-\{0\}$ and $k''a_3\neq b\cdot b_3,\forall (k'',b)\in \bc\times H^0(\mone)-\{(0,0)\}$.  
\begin{lemma}\label{mcfive}The $\bt$-fixed locus of $M_2^c$ consists of 462 points and 12 affine lines.\end{lemma}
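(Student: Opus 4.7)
The plan is to parametrize $M_2^c$ by structural invariants analogous to the pair $(R_f,[\omega_f])$ used in the proof of Lemma \ref{mtfive}, and then to decompose the $\bt$-fixed locus according to these invariants. Looking at the matrix shape (\ref{romfitc}), a pair $(E,f)\in M_2^c$ is built from two interlocking pieces: the restriction of $f$ to the $\mmon^{\oplus 2}$ part of $E\otimes\mmon$ yields, exactly as in diagram (\ref{fficd}), a short exact sequence
\[
0\to\mo_{\p^2}(-2)\xrightarrow{(b_1,b_2)}\mo_{\p^2}^{\oplus 2}\to R_f\to 0
\]
together with auxiliary morphisms coming from $(a_1,a_2)$ and $(e_1,e_2)$, while the column involving $(b_3,a_3)$ provides an independent piece of data attached to the $\mo_{\p^2}(-2)$ summand.

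First I would record the intrinsic invariants of $(E,f)$ modulo the gauge group on $E$ and on $E\otimes\mmon$. The pair $(b_1,b_2)$ gives a point of $Gr(2,H^0(\mone))$; the column $(b_3,a_3)$ gives a class $[b_3]\in\p(H^0(\mone))$ and a class $[a_3]\in\p\!\left(H^0(\mo_{\p^2}(2))/b_3\cdot H^0(\mone)\right)$ (the stability condition forces $b_3\neq 0$ and $a_3\notin b_3\cdot H^0(\mone)$); and the remaining data from $(a_1,a_2,e_1,e_2)$ lives in appropriate projectivized quotient spaces after absorbing the row/column operations that normalize the two distinguished $1$-entries. A point of $M_2^c$ is $\bt$-fixed iff each of these projective invariants is fixed.

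The enumeration then proceeds case by case, in the style of Lemma \ref{mtfive}. The pencil $\langle b_1,b_2\rangle$ has $6+9=15$ $\bt$-fixed classes (6 with $b_1,b_2$ coprime, giving torsion-free $R_f$, and 9 with a common factor, giving $R_f$ in a non-split extension of shape (\ref{trfft})). For each of these, $[b_3]$ runs over the 3 $\bt$-fixed classes in $\p(H^0(\mone))$, and then $[a_3]$ over the $\bt$-fixed classes of a residual toric-weight-graded projective space. Once those are fixed, the $(a_1,a_2,e_1,e_2)$-data contributes the bulk of $\bt$-fixed points in a further projectivization; and when two $\bt$-eigenspaces coincide in weight (the same mechanism as in the proof of Lemma \ref{tfour}), a $\bt$-fixed projective line appears, from which the stability condition removes exactly one point, leaving an affine line. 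Summing contributions across the torsion-free and the torsion $R_f$-cases should produce the stated 462 isolated points and 12 affine lines.

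The main obstacle will be the bookkeeping of the gauge equivalence. One must track precisely which matrix entries survive as genuine moduli after row and column operations simultaneously normalize the $(b_1,b_2)$-pencil, the $(b_3,a_3)$-column, and the two constant $1$-entries; and one must identify the degeneracy loci where $b_3\in\langle b_1,b_2\rangle$ or additional coincidences of weights cause the $\bt$-fixed locus to jump in dimension. Once the correct quotient description is established, the $\bt$-fixed counts drop out from the toric weight decomposition of each $H^0(\mo_{\p^2}(n))$, exactly as in Lemmas \ref{tfour} and \ref{mtfive}.
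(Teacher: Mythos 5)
There is a genuine error in your setup. You have imported the combinatorics of the stratum $M_2^s$ (Lemma \ref{mtfive}) into $M_2^c$, but the two strata are not parallel in the way you assume: in the matrix (\ref{romfitc}) the entries $b_1,b_2$ lie in $H^0(\mo_{\p^2}(1))$, not $H^0(\mo_{\p^2}(2))$. Two linearly independent \emph{linear} forms are always coprime, so the pencil $\langle b_1,b_2\rangle$ is a point of $Gr(2,3)=\p^2$ with only $3$ torus-fixed classes — not the $6+9=15$ classes with a coprime/common-factor dichotomy that you claim. In particular there is no analogue here of the extension (\ref{trfft}): the sheaf cut out by $(b_1,b_2)$ is always the ideal sheaf $I_1$ of a single point, and your proposed mechanism for producing the affine lines (a common factor in the $(b_1,b_2)$-pencil) never occurs. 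Since you also stop short of carrying out the final count ("should produce the stated 462 \dots and 12 \dots"), the argument as written cannot reach the stated numbers.

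The correct structure, following the paper, is that $(E,f)\in M_2^c$ is determined by a triple $(I_1,I_2,[\omega_f])$, where $I_1$ is the ideal of the point cut out by the two linear forms $(b_1,b_2)$, $I_2$ is the ideal of the length-$2$ scheme cut out by $(b_3,a_3)$ (a linear and a quadratic form with no common factor, by stability), and $[\omega_f]\in\p(H^0(I_1\otimes I_2\otimes\mo_{\p^2}(5)))$ avoiding the sections of the torsion $T$ of $I_1\otimes I_2$. The affine lines arise precisely from this torsion: $T$ is either $0$ or the skyscraper $\mo_{\{x\}}$ at the point $x$ defined by $I_1$, and $T\neq 0$ exactly when $x$ lies in the support of $I_2$. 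For each of the $3$ fixed $I_1$ there are $9$ fixed $I_2$, of which $5$ give $T=0$ (contributing $18$ fixed $[\omega_f]$ each) and $4$ give $T\simeq\mo_{\{x\}}$ (contributing $16$ points and $1$ affine line each, the line coming from the unique $2$-dimensional weight space containing $H^0(T(5))$). This yields $3\times5\times18+3\times4\times16=462$ points and $3\times4=12$ affine lines. Your general strategy (reduce to discrete invariants and decompose section spaces into torus weight spaces) is the right one, but the invariants and the source of the higher-dimensional fixed components must be identified as above for the count to close.
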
  
\begin{proof}From the proof of Lemma 4.4 in \cite{myself}, we can see that $\bt$-fixed points in $M_2^c$ correspond to $\bt$-fixed isomorphism classes of the triples $(I_1,I_2,[\omega_f])$ where $I_1$ is defined by $(b_1,b_2)$, $I_2$ defined by $(b_3,a_3)$ and $[\omega_f]\in\p(H^0(I_1\otimes I_2\otimes\mo_{\p^2}(5)))-\p(H^0((T\otimes\mo_{\p^2}(5))))$ with $(I_1\otimes I_2)^t$ the torsion of $I_1\otimes I_2$.

Notice that the torsion of $I_1\otimes I_2$ is either zero or isomorphic to a skyscraper sheaf $\mo_{\{x\}}$ with $x$ the single point defined by $I_1$ on $\p^2$.  

There are 3 $\bt$-fixed $I_1$.  For each $I_1$ there are 5 $\bt$-fixed $I_2$ such that $I_1\otimes I_2$ are torsion free and 4 $\bt$-fixed $I_2$ such that $I_1\otimes I_2$ have torsion.  We repeat the argument we did in the proof of Lemma \ref{tfour} and get: for each $I_1\otimes I_2$ torsion free, there are 18 $\bt$-fixed $[\sigma_f]$; while for each $I_1\otimes I_2$ with torsion, the $\bt$-fixed locus of $[\sigma_f]$ consists of 16 points and 1 affine line.  Hence we have in total $3\times5\times18+3\times 4\times 16=462$ points and $3\times4=12$ affine lines.  
\end{proof}

For $(E,f) \in M_1=\widetilde{W}^5$, we get the following diagram
\begin{equation}\label{tqficd}\xymatrix{ 0\ar[r]&\mmon^{\oplus 3}\ar[r]^{~~f_{B^t}}&\mo_{\p^2}^{\oplus 4}\ar[r]^{f_q}& Q_f\ar[r]&0\\
&&\mo_{\p^2}(-2)\ar[u]^{f_{A^t}}\ar[ru]_{\sigma_f=f_q\circ f_{A^t}}&&}.\end{equation}
Moreover $(E,f)$ and $(Q_f,\sigma_f)$ determine each other.

$M_1-W^5$ can be divided into three $\bt$-invariant strata by the form of $Q_f$ as follows, where $T_f$ is the torsion of $Q_f$ and $Q_f^{tf}$ the torsion-free quotient of $Q_f$.
\begin{eqnarray} & \Pi_1:=&\{[(Q_f,\sigma_f)]\in M_1-W^5|T_f\simeq \mo_{2H},Q^{tf}_f\simeq\mone\};\nonumber\\&\Pi_2:=&\{[(Q_f,\sigma_f)]\in M_1-W^5|T_f\simeq \mo_{H}(-1),Q_f^{tf}\simeq I_2\otimes\mo_{\p^2}(2)\};\nonumber\\&\Pi_3:=&\{[(Q_f,\sigma_f)]\in M_1-W^5|T_f\simeq \mo_{H}(-2),Q_f^{tf}\simeq I_1\otimes\mo_{\p^2}(2)\}.\nonumber
\end{eqnarray}

\begin{lemma}\label{pione}The $\bt$-fixed locus of $\Pi_1$ consists of 39 points and 21 affine lines.
\end{lemma}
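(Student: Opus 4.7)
The plan is to follow the same strategy as the proof of Lemma \ref{tfour}: reduce the determination of $\bt$-fixed points in $\Pi_1$ to (i) enumerating the $\bt$-fixed isomorphism classes of $Q_f$, and (ii) for each such $Q_f$, describing the $\bt$-fixed locus of admissible $[\sigma_f]\in \p(H^0(Q_f(2)))\setminus \p(\bar{\jmath}(H^0(\mo_C(2))))$. By definition $\Pi_1$ parametrizes non-split extensions $0\to \mo_C\to Q_f\to \mone\to 0$ with $C\in|2H|$ a conic. From the resolution $0\to \mo_{\p^2}(-3)\xrightarrow{f_C}\mo_{\p^2}(-1)\to \mo_C(-1)\to 0$ one computes $\mathrm{Ext}^1(\mone,\mo_C)\cong H^1(\mo_C(-1))\cong H^2(\mo_{\p^2}(-3))\cong\bc$, so $Q_f$ is determined by $C$ up to isomorphism. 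The $\bt$-fixed conics are the six monomial eigenvectors $x^2,y^2,z^2,xy,xz,yz\in H^0(\mo_{\p^2}(2))$, giving six $\bt$-fixed iso classes of $Q_f$.

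For each such $C$, twisting the extension by $\mo_{\p^2}(2)$ and taking global sections produces a short exact sequence
\[
0\to H^0(\mo_C(2))\xrightarrow{\bar{\jmath}}H^0(Q_f(2))\xrightarrow{\bar{p}}H^0(\mo_{\p^2}(3))\to 0
\]
of dimensions $5,15,10$. Exactly as in Lemma \ref{tfour}, the inclusion $\bar{\jmath}$ is \emph{not} a weight-zero $\bt$-equivariant map; it carries a weight shift. I would derive this shift either by extending the explicit matrix presentation (\ref{tqmqf})--(\ref{tqmqft}) to the $\Pi_1$ case (writing $f$ as a $5\times 5$ matrix whose cokernel has torsion $\mo_C$) and reading off the diagonal conjugation action of $\bt$, or more conceptually by identifying $\mathrm{Ext}^1(\mone,\mo_C)\cong \omega_C\otimes H^2(\mo_{\p^2}(-3))$ as $\bt$-representations, where the \v{C}ech generator $1/(xyz)$ of $H^2(\mo_{\p^2}(-3))$ has weight $(t_1t_2)^{-1}$. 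Either way the shift comes out to $t_1t_2\cdot \omega_C^{-1}$, with $\omega_C$ the $\bt$-weight of $f_C$.

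With the shift determined, I would decompose $H^0(Q_f(2))$ into $\bt$-eigen-subspaces $V_w$. Each such $V_w$ contributes to the fixed locus either a single fixed point (when $\dim V_w=1$ and $V_w\not\subset\bar{\jmath}(H^0(\mo_C(2)))$), or an affine line $\p^1\setminus\{\text{pt}\}\cong\mathbb{A}^1$ (when $\dim V_w=2$ and $\dim V_w\cap \bar{\jmath}(H^0(\mo_C(2)))=1$), or nothing (when $V_w\subset\bar{\jmath}(H^0(\mo_C(2)))$). A case-by-case enumeration using the shift $t_1t_2\omega_C^{-1}$ shows that each of the three double-line conics $V(x^2),V(y^2),V(z^2)$ contributes $2$ affine lines and $8$ points, and each of the three reducible conics $V(xy),V(xz),V(yz)$ contributes $5$ affine lines and $5$ points. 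Summing over the six conics yields $3\cdot 2+3\cdot 5=21$ affine lines and $3\cdot 8+3\cdot 5=39$ points, as claimed.

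The main obstacle is the correct identification of the weight shift of $\bar{\jmath}$ for each conic: a naive computation treating $\bar{\jmath}$ as weight-zero scrambles the partition into points and affine lines (while preserving the total $60=6\cdot10$), and getting the right answer depends on tracking $\omega_C$ carefully. A secondary technical point is to verify that the non-split extension exhausts the $\bt$-fixed $Q_f$ in $\Pi_1$---i.e.\ that the split extensions $\mo_C\oplus\mone$ do not give rise to additional pairs $(E,f)\in \Pi_1$, presumably because the corresponding $F=\mathrm{coker}(f)$ would fail to be stable or to admit a minimal free resolution with $E\simeq\mo_{\p^2}\oplus\mmon^{\oplus 4}$.
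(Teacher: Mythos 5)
Your proposal follows essentially the same route as the paper's proof: enumerate the six $\bt$-fixed conics supporting the (unique non-split) extension defining $Q_f$, decompose $H^0(Q_f\otimes\mo_{\p^2}(2))$ into $\theta^{*}_{(t_1,t_2)}$-eigenspaces while tracking the nontrivial weight of $\bar{\jmath}$, and count points versus affine lines eigenspace by eigenspace; your per-conic tallies ($8$ points and $2$ lines for each double line, $5$ points and $5$ lines for each pair of distinct lines) and your weight-shift $t_1t_2\omega_C^{-1}$ agree exactly with the paper's explicit matrix computation (e.g.\ $\theta^{*}_{(t_1,t_2)}\cdot\bar{\jmath}=\frac{t_1}{t_2}\bar{\jmath}$ for $C=\{z^2=0\}$). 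The proposal is correct.
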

\begin{proof}Notice that Ext$^i(\mone,\mo_{2H})=0$ for all $i\neq 1$ and Ext$^1(\mone,\mo_{2H})\simeq\bc$, hence $Q_f$ is uniquely determined by the support of $T_f=\mo_{2H}$.  The argument is like Lemma \ref{tfour} but with a bit difference, since $H^0(T_f)\neq0$ which implies that one of the $3\times3$ submatrices of $B$ can be chosen degenerate.  

Assume $T_f\simeq \mo_{\{z^2=0\}}$.  Every element $\gamma$ in $H^0(Q_f\otimes\mo_{\p^2}(2))$ can be assigned to a matrix $N_{\gamma}$ of form (\ref{matrix}), more precisely, write $\gamma\sim\underline{a^{\gamma}}:=(a_1^{\gamma},a_2^{\gamma},a_3^{\gamma},a_4^{\gamma})$ with $a^{\gamma}_i\in H^0(\mo_{\p^2}(2))$, $N_{\gamma}$ can be written as follows.
\begin{equation}\label{tqmqp}N_{\gamma}=N_{\underline{a^{\gamma}}}:=\left(\begin{array}{ccccc}0&1&0&0&0\\a_1^{\gamma}&0&z&0&0\\a_2^{\gamma}&0&-x&z&0\\a_3^{\gamma}&0&0&y&x\\ a_4^{\gamma}&0&y&0&z\end{array}\right),
\end{equation}

We have the following exact sequence.
\begin{equation}\label{tqfgsfiv}\xymatrix@C=0.7cm{0\ar[r]&H^0(\mo_{\{z^2=0\}}\otimes\mo_{\p^2}(2))\ar[r]^{~~\bar{\jmath}} &H^0(Q_f\otimes\mo_{\p^2}(2))\ar[r]^{~~~\bar{p}} &H^0(\mo_{\p^2}(3))\ar[r] &0.}\end{equation}
The map $\bar{p}$ in (\ref{tqfgsfiv}) is defined by $\bar{p}(\gamma)=\frac{det(N_{\gamma})}{z^2}.$
  
For every $\alpha\in H^0(\mo_{\{z^2=0\}}\otimes\mo_{\p^2}(2))$, $N_{\alpha}=N_{\bar{\jmath}(\alpha)}$ is defined by
\begin{equation}\label{tqmqpt}N_{\alpha}:=\left(\begin{array}{ccccc}0&1&0&0&0\\ \alpha &0&z&0&0\\ 0&0&-x&z&0\\0&0&0&y&x\\ 0&0&y&0&z\end{array}\right).
\end{equation}

$\forall(t_1,t_2)\in\bt$, define an element $\gamma\in H^0(Q_f\otimes\mo_{\p^2}(2))$ to be an eigenvector of $\theta_{(t_1,t_2)}^{*}$ with eigenvalue $\lambda$, if $\exists~\underline{\alpha^{\gamma}}\sim\gamma$ such that 
\[\theta^{*}_{(t_1,t_2)}N_{\underline{a^{\gamma}}}=Diag(1,\frac{t_2}{t_1},\frac1{t_1},\frac1{t_2},1)\cdot  N_{\lambda\underline{a^{\gamma}}}\cdot Diag(1,1,t_1,t_1t_2,t_2).\]  

Let $\alpha\in H^0(\mo_{\{z^2=0\}}\otimes\mo_{\p^2}(2))$ satisfying $\theta_{(t_1,t_2)}^{*}\alpha=\lambda_{\alpha}\alpha$, then $\bar{\jmath}(\alpha)$ is of eigenvalue $\frac{t_1}{t_2}\lambda_{\alpha}$, in other words, $\theta_{(t_1,t_2)}^{*}\cdot\bar{\jmath}=\frac{t_1}{t_2}\bar{\jmath}$.  Let $\gamma\in H^0(Q_f\otimes\mo_{\p^2}(2))$ be an eigenvector with eigenvalue $\lambda$, then $\theta_{(t_1,t_2)}^{*}(\bar{p}(\gamma))=\lambda\bar{p}(\gamma)\in H^0(\mo_{\p^2}(3))$.

$H^0(Q_f\otimes\mo_{\p^2}(2))$ can be decomposed into a direct sum of 13 eigen-subspaces of $\theta^{*}_{(t_1,t_2)}$.  There are 8 one-dimensional eigen-subspaces spanned by $\gamma^{(i,j)}$ with $\bar{p}(\gamma^{(i,j)})=x^{3-i-j}y^iz^j$ of eigenvalues $t_1^it_2^j,$ for $i,j\geq0$, $i+j\leq3$ and $(i,j)$ not equal to $(1,0)$ or $(2,0)$.  These subspaces provide 8 $\bt$-fixed points in $\p(H^0(Q_f\otimes\mo_{\p^2}(2)))$ not contained in $\p(H^0(\mo_{\{z^2=0\}}\otimes\mo_{\p^2}(2)))$.  

There are 3 one-dimensional eigen-subspaces spanned by $\bar{\jmath}(x^iy^j|_{z^2=0})$ with$i,j\geq0$, $i+j=2$.  These subspaces are contained in $\p(H^0(\mo_{\{z^2=0\}}\otimes\mo_{\p^2}(2)))$ and don't give any $\sigma_f$.  

There are 2 two-dimensional eigen-subspaces spanned by $\{\bar{\jmath}(yz|_{\{z^2=0\}}),\gamma^{(2,0)}\}$ and $\{\bar{\jmath}(xz|_{\{z^2=0\}}),\gamma^{(1,0)}\}$ of eigenvalues $t^2_1$ and $t_1$ respectively.  These subspaces provide 2 $\bt$-fixed projective lines $\p^1$ in $\p(H^0(Q_f\otimes\mo_{\p^2}(2)))$, either of which intersects $\p(H^0(\mo_{\{z^2=0\}}\otimes\mo_{\p^2}(2)))$ at one point.  

The same holds for $Q_f$ with torsion $\mo_{\{x^2=0\}}$ and $\mo_{\{y^2=0\}}$.  Hence we get in all $24$ points and 6 affine lines.  

If the support of $T_f$ is reduced but not irreducible, one can check these three cases are different from the three cases we just computed, and every eigenvector in $H^0(T_f\otimes\mo_{\p^2}(2))$ gives affine line for $\bt$-fixed $[\sigma_f]$.  We omit the computation.  We get in all 15 points and 15 affine lines.

Therefore in $\Pi_1$ in total $\bt$-fixed locus consists of 39 points and 21 affine lines.
\end{proof} 

Let $(Q_f,\sigma_f)\in \Pi_2$.  By the proof of Lemma 4.6 in \cite{myself} we have the following commutative diagram
\begin{equation}\label{pitcd}\xymatrix@C=0.7cm@R=0.5cm{& &0\ar[d] &0\ar[d]\\
&&\mmon\ar[d]_{\delta_f}\ar[r]^{id}_{=}&\mmon\ar[d]^{(a,b)}&\\
0\ar[r]&\mo_H(-1)\ar[r]\ar[d]_{id}^{=}&\widetilde{Q_f}\ar[d]_{\delta}\ar[r]^{}& \mone\oplus\mo_{\p^2}\ar[r]\ar[d]^g&0~~(*)\\
0\ar[r]&\mo_H(-1)\ar[r]&Q_f\ar[r]^{}\ar[d] &I_2\otimes\mo_{\p^2}(2)\ar[d]\ar[r]&0~~(**)\\& &0 &0}
\end{equation}
where the sequence $(*)$ does not split and $\widetilde{Q_f}$ is the Cartesian product of $Q_f$ and $\mone\oplus\mo_{\p^2}$ over $I_2\otimes\mo_{\p^2}(2)$.  Moreover $\widetilde{Q_f}\simeq\mo_{\p^2}\oplus Q_f^1$ with $Q_f^1$ lying in the following non-splitting sequence
\begin{equation}\label{qfone}0\ra\mo_H\stackrel{\jmath~}{\rightarrow} Q_f^1\otimes\mo_{\p^2}(1)\stackrel{p}{\ra}\mo_{\p^2}(2)\ra0.\end{equation}
Also notice that $Q_f^1$ is unique up to isomorphism for any given $\mo_H(-1)$.

From the diagram (\ref{pitcd}) we can see that $Q_f$ is fixed by $\bt$ iff $\widetilde{Q_f}$ and the map class $[\delta_f]\in\{\delta_f\}/Aut(\widetilde{Q_f})$ are fixed.  By the fact $\widetilde{Q_f}\simeq\mo_{\p^2}\oplus Q_f^1$,  $\widetilde{Q_f}$ is $\bt$-fixed iff $\mo_H(-1)$ is $\bt$-fixed.  Hence $Q_f$ is $\bt$-fixed iff $T_f$ and $[\delta_f]$ are $\bt$-fixed.

\begin{lemma}\label{pitwo}The $\bt$-fixed locus of $\Pi_2$ consists of 264 points, 81 affine lines and 6 affine planes. 
\end{lemma}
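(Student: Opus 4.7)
The plan is to follow the template set by Lemma \ref{tfour} and Lemma \ref{pione}. By the diagram (\ref{pitcd}), every $\bt$-fixed point of $\Pi_2$ is encoded by a $\bt$-fixed torsion $T_f\simeq\mo_H(-1)$, a $\bt$-fixed class $[\delta_f]\in Hom(\mmon,\widetilde{Q_f})/Aut(\widetilde{Q_f})$ (which together with $T_f$ determines $Q_f$), and a $\bt$-fixed $[\sigma_f]\in\p(H^0(Q_f\otimes\mo_{\p^2}(2)))$ lying outside $\p(\bar{\jmath}(H^0(T_f\otimes\mo_{\p^2}(2))))$. There are three candidate lines $H\in\{\{x=0\},\{y=0\},\{z=0\}\}$, and by the evident symmetry one may handle them in parallel.

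Fix, say, $H=\{z=0\}$, so $T_f\simeq\mo_{\{z=0\}}(-1)$ and $Q_f^1$ is the unique nontrivial extension in (\ref{qfone}). To enumerate the $\bt$-fixed $Q_f$, I would decompose $Hom(\mmon,\widetilde{Q_f})\simeq Hom(\mmon,\mo_{\p^2})\oplus Hom(\mmon,Q_f^1)$ into $\theta_{(t_1,t_2)}^{*}$-weight spaces and quotient by the $Aut(\mo_{\p^2}\oplus Q_f^1)$-action. Each resulting orbit produces a distinct $\bt$-fixed $Q_f$ whose torsion-free quotient is $I_2\otimes\mo_{\p^2}(2)$ for some $\bt$-fixed length-$2$ ideal $I_2$; the cases that arise are (i) two distinct torus-fixed points and (ii) a fat point at a single torus-fixed point, with further subdivision depending on whether the points do or do not lie on $H$.

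For each such $\bt$-fixed $Q_f$ I then replay the eigenspace calculation from Lemma \ref{tfour}: represent a section $\gamma\in H^0(Q_f\otimes\mo_{\p^2}(2))$ by a matrix of the shape (\ref{matrix}), determine the $\theta_{(t_1,t_2)}^{*}$-weights of the resulting vectors, and decompose $H^0(Q_f\otimes\mo_{\p^2}(2))$ into eigen-subspaces. A one-dimensional eigen-subspace outside the excluded locus contributes a single fixed $[\sigma_f]$; a two-dimensional eigen-subspace meeting $\p(\bar{\jmath}(H^0(T_f\otimes\mo_{\p^2}(2))))$ in one point contributes an $\mathbb{A}^1$; and, crucially, a three-dimensional eigen-subspace whose intersection with that excluded locus is a $\p^1$ contributes an $\mathbb{A}^2$. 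Such three-dimensional weight collisions are possible here because enlarging the torsion-free quotient from $\mone$ (as in Lemma \ref{tfour}) to $I_2\otimes\mo_{\p^2}(2)$ introduces additional sections that can share a weight with $\bar{\jmath}(H^0(T_f\otimes\mo_{\p^2}(2)))$; they are what account for the $6$ affine planes (two per choice of $H$).

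The principal obstacle is the enumeration in the second step: one must list the $\bt$-fixed $[\delta_f]$-orbits, and hence the $\bt$-fixed $Q_f$'s supported over a given $T_f$, without overcounting, keep track of when $I_2$ acquires an embedded or fat component on $H$, and in every resulting case verify which weights repeat in $H^0(Q_f\otimes\mo_{\p^2}(2))$ and with what multiplicity. Once this combinatorial census is complete, summing the contributions over the three choices of $H$ produces the stated totals of $264$ fixed points, $81$ affine lines and $6$ affine planes.
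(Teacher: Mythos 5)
Your high-level strategy (use the diagram (\ref{pitcd}) to reduce to counting $\bt$-fixed $Q_f$ via $[\delta_f]$, then count $\bt$-fixed $[\sigma_f]$ by an eigenspace decomposition as in Lemma \ref{tfour}) is the paper's, but the mechanism you propose for the $6$ affine planes is wrong, and this is the one genuinely new phenomenon the lemma has to explain. You claim they come from three-dimensional eigen-subspaces of $H^0(Q_f\otimes\mo_{\p^2}(2))$ meeting the excluded locus in a $\p^1$. No such eigenspaces occur: for a single $\bt$-fixed $Q_f$ in $\Pi_2$ one has $h^0(Q_f\otimes\mo_{\p^2}(2))=15$, the excluded subspace $H^0(\mo_H(1))$ is only $2$-dimensional, and the weight decomposition consists of $11$ one-dimensional and $2$ two-dimensional eigen-subspaces; so the fixed $[\sigma_f]$ over any fixed $Q_f$ form exactly $11$ points and $2$ affine lines, never an $\mathbb{A}^2$.

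The affine planes come from a source your census explicitly excludes: the $\bt$-fixed $Q_f$ are \emph{not} a discrete list of orbits indexed by the type of $I_2$. Writing $\delta_f=(\delta_f^1,b)$ with $\delta_f^1\in H^0(Q_f^1\otimes\mo_{\p^2}(1))$ and $b\in H^0(\mo_{\p^2}(1))$, the space $H^0(Q_f^1\otimes\mo_{\p^2}(1))$ has a two-dimensional $t_1t_2$-eigenspace spanned by $\bar{\jmath}(1|_H)$ and $\gamma^{(1,1)}$, which yields, for each of the three choices of $H$, an entire affine line of pairwise non-isomorphic $\bt$-fixed $Q_f$ (in addition to $8$ isolated ones, once the admissible $b$ with $\gcd(a,b)=1$ are counted: two choices of $b$ when $a\sim x^2,y^2,z^2$, one choice otherwise). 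The final tallies are then products of the $Q_f$-count with the fiberwise $[\sigma_f]$-count: $24$ fixed $Q_f$-points and $3$ fixed $Q_f$-lines, each carrying $11$ points and $2$ affine lines of fixed $[\sigma_f]$, give $24\times11=264$ points, $3\times11+24\times2=81$ affine lines, and $3\times2=6$ affine planes. A scheme that treats the fixed $Q_f$ as isolated can produce no $\mathbb{A}^2$ at all and also undercounts the affine lines, so this is a gap you would need to close rather than a detail to defer.
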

\begin{proof}
Denote by $\bar{\jmath}$ and $\bar{p}$ the maps on the global sections induced by $\jmath$ and $p$ in (\ref{qfone}) respectively.  Assume $T_f\simeq\mo_{\{x=0\}}(-1)$.  Let $\delta_f^1\in H^0(Q_f^1\otimes\mo_{\p^2}(1))$ be the restriction of map $\delta_f$ to the direct summand $Q_f^1$ in $\widetilde{Q_f}$.  Then by similar argument to Lemma \ref{tfour}, $H^0(Q^1_f\otimes\mo_{\p^2}(1))$ can be decomposed into a direct sum of 6 eigen-subspaces of $\theta^{*}_{(t_1,t_2)}$.  There are 5 one-dimensional eigen-subspaces spanned by $\gamma^{ij}$ with $\bar{p}(\gamma^{(i,j)})=x^{2-i-j}y^iz^j$ of eigenvalues $t_1^it_2^j,$ for $i,j\geq0$, $i+j\leq2$ and $(i,j)\neq(1,1)$.  These subspaces provide 5 $\bt$-fixed points in $\p(H^0(Q^1_f(1)))$ not contained in $\p(H^0(\mo_{\{x=0\}}))$.  There are 1 two-dimensional eigen-subspaces spanned by $\{\bar{\jmath}(1|_{\{x=0\}}),\gamma^{(1,1)}\}$ of eigenvalue $t_1t_2$ (notice that $\theta^{*}\bar{\jmath}=t_1t_2\bar{\jmath}$).  This subspace provides a $\bt$-fixed projective lines $\p^1$ in $\p(H^0(Q^1_f\otimes\mo_{\p^2}(1)))$ intersecting $\p(H^0(\mo_{\{x=0\}}))$ at one point.  Hence we know that $\bt$-fixed $[\delta_f^1]$ form 5 points and 1 affine line.  

The restriction of $\delta_f$ to $\mo_{\p^2}$ gives $b\in H^0(\mo_{\p^2}(1))$ while $\bar{p}\circ \delta_f^1$ gives $a\in H^0(\mo_{\p^2}(2))$, where $(a,b)$ is defined in (\ref{pitcd}).  We have $g.c.d.(a,b)=1$.  For $[\delta_f^1]=[\gamma^{(0,0)}]$ ($[\gamma^{(2,0)}]$, $[\gamma^{(0,2)}]$ resp.), $a\sim x^2$ ($y^2$, $z^2$ resp.), then there are two choice of $b$ $\bt$-fixed up to scalars: $b\sim y$ or $z$ ($b\sim x$ or $z$, $b\sim x$ or $y$ resp.);  if $[\delta_f^1]$ lies on the affine line or equal to $[\gamma^{(1,0)}]$ or $[\gamma^{(0,1)}]$, then there is only one choice of $b$ for each $a$.  Hence we have $\bt$-fixed $[\delta_f]$ with $T_f\simeq\mo_{\{x=0\}}(-1)$ form 8 points and 1 affine line, and in total $\bt$-fixed $Q_f$ in $\Pi_2$ form 24 points and 3 affine lines.  

For each $\bt$-fixed $Q_f$ all the $\bt$-fixed $[\omega_f]$ form 11 points and 2 affine lines.  Hence in total we have the $\bt$-fixed locus of $\Pi_2$ consists of $24 \times 11=264$ points, $3\times 11+24\times 2=81$ affine lines $\mathbb{A}^1$ and $3\times2=6$ affine planes $\mathbb{A}^2$.
\end{proof}

\begin{lemma}\label{pithree}The $\bt$-fixed locus of $\Pi_3$ consists of 39 points and 3 affine lines.
\end{lemma}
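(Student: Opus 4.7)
The strategy follows the pattern of Lemmas \ref{pione} and \ref{pitwo}: parametrize points of $\Pi_3$ by $\bt$-equivariant data and then decompose the relevant space of sections into $\bt$-eigenspaces.

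First I would identify the $\bt$-fixed choices of $T_f$ and $Q_f^{tf}$. A $\bt$-fixed $T_f\simeq \mo_H(-2)$ requires $H$ to be a $\bt$-fixed line in $\p^2$, giving the three coordinate lines $\{x=0\}$, $\{y=0\}$, $\{z=0\}$. A $\bt$-fixed $Q_f^{tf}\simeq I_1\otimes\mo_{\p^2}(2)$ requires $I_1$ to be the ideal of a $\bt$-fixed point of $\p^2$, giving three choices. This produces $3\times 3=9$ pairs, which split into $6$ configurations where the fixed point lies on $H$ and $3$ configurations where it does not.

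Next, for each such pair I would imitate the construction in diagram (\ref{pitcd}) from Lemma \ref{pitwo}, using the twisted Koszul-type presentation $0\to\mo_{\p^2}\to\mo_{\p^2}(1)^{\oplus 2}\to I_1\otimes\mo_{\p^2}(2)\to 0$ to define an auxiliary sheaf $\widetilde{Q_f}$ as the fibre product of $Q_f$ with $\mo_{\p^2}(1)^{\oplus 2}$ over $I_1\otimes\mo_{\p^2}(2)$. I would then check that $\widetilde{Q_f}$ splits as $\mo_{\p^2}\oplus Q_f^1$ for some $Q_f^1$ uniquely determined up to isomorphism by the support of $T_f$, so that $Q_f$ is recovered from $\widetilde{Q_f}$ together with a lifting class $[\delta_f]$; this reduces the enumeration of $\bt$-fixed $Q_f$ to enumerating $\bt$-fixed $[\delta_f]$ for each of the $9$ supports.

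Third, for each $\bt$-fixed $Q_f$ I would analyse $H^0(Q_f\otimes\mo_{\p^2}(2))$ via the twisted sequence
\[0\to T_f\otimes\mo_{\p^2}(2)\to Q_f\otimes\mo_{\p^2}(2)\to I_1\otimes\mo_{\p^2}(4)\to 0.\]
Because $T_f\otimes\mo_{\p^2}(2)\simeq\mo_H$ has $1$-dimensional global sections and vanishing $H^1$, the space $H^0(Q_f\otimes\mo_{\p^2}(2))$ has dimension $15$, and the subspace $\bar\jmath(H^0(T_f\otimes\mo_{\p^2}(2)))$ projectivizes to a single point which $[\sigma_f]$ must avoid. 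Decomposing $H^0(Q_f\otimes\mo_{\p^2}(2))$ into $\bt$-eigenspaces as in Lemmas \ref{tfour} and \ref{pione}, every $1$-dimensional eigenspace produces an isolated $\bt$-fixed $[\sigma_f]$, and every $2$-dimensional eigenspace produces a $\bt$-fixed $\p^1$ which, whenever it contains the singleton $\bar\jmath(H^0(T_f(2)))$, yields a fixed $\mathbb{A}^1$. Summing over the $9$ configurations should give the claimed $39$ points and $3$ affine lines.

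The main obstacle is the careful bookkeeping of the $\bt$-characters in the two essentially distinct cases (fixed point on $H$ versus off $H$), together with correctly determining which $2$-dimensional eigen-subspaces actually contain the torsion section $\bar\jmath$. As in Lemma \ref{pitwo}, one expects only a small handful of relevant $2$-dimensional eigenspaces, accounting for the modest total of $3$ affine lines, while the remaining eigenspaces are either $1$-dimensional or miss the torsion point and hence contribute only isolated fixed $[\sigma_f]$.
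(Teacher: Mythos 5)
Your step 3 (the eigenspace analysis of $H^0(Q_f\otimes\mo_{\p^2}(2))$ for a single fixed $Q_f$) is consistent with what the paper does: the space is indeed $15$-dimensional, $H^0(T_f\otimes\mo_{\p^2}(2))\simeq H^0(\mo_H)$ is a single line to be avoided, and each fixed $Q_f$ contributes $13$ points and $1$ affine line. The genuine gap is in your enumeration of the $\bt$-fixed $Q_f$ themselves. You index candidates by the $9$ pairs (fixed line $H$, fixed point of $I_1$), but $Q_f$ is an extension of $I_1\otimes\mo_{\p^2}(2)$ by $\mo_H(-2)$ and is \emph{not} determined by that pair: there is nontrivial gluing data, and moreover not every such extension is realizable as the $Q_f$ of a pair $(E,f)\in\widetilde{W}^5$, i.e.\ admits a surjection from $\mo_{\p^2}^{\oplus 4}$ with kernel $\mmon^{\oplus 3}$. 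The paper handles this by parametrizing the gluing via the surjection $f_{tq}:\mo_{\p^2}^{\oplus4}\to Q_f^{tf}$, encoded by a row vector $\underline{h}=(h_0,\dots,h_4)$ cutting $\mo_{\p^2}^{\oplus4}$ out of $\mo_{\p^2}^{\oplus5}$; $\bt$-fixedness forces all but one $h_i$ to vanish, and the realizability constraint coming from Lemma 6.9 of \cite{myself} (one cannot have $h_1h_2-h_0h_3=h_1^2-h_0h_4=h_1h_3-h_2h_1=0$) kills four of the five remaining candidates. The upshot is exactly \emph{one} fixed $Q_f$ per fixed $Q_f^{tf}$, hence $3$ in total, with the support of $T_f$ then forced. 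This is where $39=3\times 13$ and $3=3\times 1$ come from; summing over your $9$ configurations, each contributing $13$ points, would give $117$, so the final sentence of your proposal cannot be made to work without this selection step.

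A secondary concern: your proposed fibre-product construction of $\widetilde{Q_f}$ via the Koszul presentation $0\to\mo_{\p^2}\to\mo_{\p^2}(1)^{\oplus2}\to I_1\otimes\mo_{\p^2}(2)\to0$ is not what the paper uses here (it uses the kernel $K$ of $\mo_{\p^2}^{\oplus4}\twoheadrightarrow Q_f^{tf}$ and the extension $\mo_{\p^2}^{\oplus4}\subset\mo_{\p^2}^{\oplus5}$, exploiting $\mathrm{Hom}(\mmon,T_f)=0$ to see that $f_{B^t}$ is determined by $f_{tq}$). Even if your splitting $\widetilde{Q_f}\simeq\mo_{\p^2}\oplus Q_f^1$ could be verified in this setting, you would still have to build into the lifting classes $[\delta_f]$ the condition that the resulting $Q_f$ lies in $\Pi_3$ rather than degenerating to another stratum; that is precisely the missing non-degeneracy analysis above.
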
  
\begin{proof}
For $(Q_f,\sigma_f)\in\Pi_3$, we have the following two commutative diagram (see the diagram (6.15) and (6.21) in \cite{myself}).
 \begin{equation}\label{tqcdffi}\xymatrix@C=1cm@R=0.5cm{& &0\ar[d] &0\ar[d]\\
0\ar[r]&\mmon^{\oplus 3}\ar[r]^{~~~\imath}\ar[d]^{=}_{id}&K \ar[d]_{j}\ar[r]&T_f\ar[r]\ar[d]&0\\
0\ar[r]&\mmon^{\oplus 3}\ar[r]^{~~~f_{B^t}}&\mo_{\p^2}^{\oplus 4}\ar[d]_{f_{tq}}\ar[r]^{f_q}& Q_f\ar[r]\ar[d]&0\\
& &Q^{tf}_f\ar[r]^{=}_{id}\ar[d] &Q_f^{tf}\ar[d]\\& &0 &0}
\end{equation} 

\begin{equation}\label{cdpth}\xymatrix@C=0.7cm@R=0.5cm{&0\ar[d] &0\ar[d]&\\
0\ar[r]&K\ar[d]\ar[r]^{j}&\mo_{\p^2}^{\oplus 4}\ar[d]^{}\ar[r]^{f_{tq}}&Q^{tf}_f\ar[r]\ar[d]_{id}^{=}&0\\
0\ar[r]&G\ar[r]\ar[d]_{\tau}&\mo_{\p^2}^{\oplus 5}\ar[d]^{\tilde{\tau}}\ar[r]^{g}& Q^{tf}_f\ar[r]&0,\\
&\mo_{\p^2}\ar[d]\ar[r]_{id}^{=}&\mo_{\p^2}\ar[d] &&\\&0 &0 &}
\end{equation}

$T_f\simeq \mo_H(-2)$ hence Hom$(\mmon, T_f)=0$.  The inclusion $\imath$ in (\ref{tqcdffi}) is unique up to isomorphisms of $\mmon^{\oplus 3}$ for a given $K$.  Hence $f_{B^t}$ is determined by the inclusion $j$ and hence is determined by $f_{tq}$.  Parametrizing $f_{B^t}$ is equivalent to parametrizing the surjective map $f_{tq}$.  Hence $Q_f$ is $\bt$-fixed iff $Q_f^{tf}$ and $[f_{tq}]$ are $\bt$-fixed with $[f_{tq}]\in\{f_{tq}\}/Aut(\mo_{\p^2}^{\oplus4})$.

There are 3 $\bt$-fixed $Q_f^{tf}$.  We assume $Q_f^{tf}\simeq I_{\{[0,0,1]\}}\otimes \mo_{\p^2}(2)$,  then $g$ can be represented by a $5\times 1$ matrix $(x^2,xy,xz,yz,y^2)$ and $\tilde{\tau}$ can be represented by  $\underline{h}:=(h_0,h_1,h_2,h_3,h_4)$ with $h_i\in\bc$.  

Notice that $\forall ~N\in Mat_{5\times4}(\bc)$ and $Rank(N)$=4, $f_{tq}$ can be represented by $(x^2,xy,xz,yz,y^2)N$ iff $\underline{h}N=(0,0,0,0)$.  Hence $f_{tq}$ is $\bt$-fixed iff $\exists ~N_{(t_1,t_2)}\in Mat_{5\times4}(\bc)$ of rank 4 such that $\underline{h}N_{(t_1,t_2)}=\underline{h}\cdot Diag(1,t_1,t_2,t_1t_2,t_2^2)N_{(t_1,t_2)}=(0,0,0,0)$ for any $(t_1,t_2)\in\bt$.  Hence $f_{tq}$ is $\bt$-fixed $\Leftrightarrow$ $\underline{h}$ is linearly dependent to $\underline{h}\cdot Diag(1,t_1,t_2,t_1t_2,t_2^2)$ $\Leftrightarrow$ all except one of $h_i$ are zero.

Also by the proof of Lemma 6.9 in \cite{myself}, we can't have $h_1h_2-h_0h_3=h_1^2-h_0h_4=h_1h_3-h_2h_1=0$.  Hence there is only 1 fixed $f_{tq}$ corresponding to $\underline{h}=(0,1,0,0,0)$ up to isomorphism, and in this case $T_f\simeq\mo_{z=0}(-2)$.  Hence in total we have 3 $\bt$-fixed $Q_f$. 

For each $\bt$-fixed $Q_f$, $\bt$-fixed $[\sigma_f]$ form 13 points and 1 affine line. Hence in total we have 39 points and 3 affine lines.
\end{proof}

\begin{lemma}\label{wfive}The $\bt$-fixed locus in $W^5$ consists of 480 points.
\end{lemma}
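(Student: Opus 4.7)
The plan mirrors Lemma \ref{wfour}. By Proposition \ref{bigopen}, a point in $W^5$ corresponds bijectively to a pair $(I_6, [\sigma_f])$ with $I_6$ an ideal sheaf of $6$ points not lying on a conic and $[\sigma_f] \in \p(H^0(I_6 \otimes \mo_{\p^2}(5)))$, and is $\bt$-fixed iff both $I_6$ and $[\sigma_f]$ are. I would count these two factors separately and multiply.

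For $\bt$-fixed $I_6$: these are monomial ideals of colength $6$, indexed by triples $(\lambda^{(1)}, \lambda^{(2)}, \lambda^{(3)})$ of partitions with $\sum_i |\lambda^{(i)}| = 6$ sitting at the three torus-fixed points of $\p^2$. The total is $\sum_{a+b+c=6} p(a)p(b)p(c) = 221$. Since $H^0(I_Z \otimes \mo_{\p^2}(2))$ is a $\bt$-representation, $Z$ lies on a conic iff it lies on one of the six $\bt$-fixed monomial conics $x^2, y^2, z^2, xy, xz, yz$. For each such $m$, the condition $m \in I_Z$ translates into explicit partition constraints at each $p_i$ (for instance $x^2 \in I_Z$ forces $\lambda^{(1)} = \emptyset$ and confines $\lambda^{(2)}, \lambda^{(3)}$ to two-column strips, while $xy \in I_Z$ forces $\lambda^{(1)}, \lambda^{(2)}$ to be columns and $\lambda^{(3)}$ a hook). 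I would then run the six-term inclusion-exclusion: the three $\square^2$ sets each have size $30$ and the three $\square\cdot\square$ sets each size $63$; pairwise intersections are $0$ whenever two square conics are combined or whenever a square conic is paired with the linear conic not sharing its variable, and equal $12$ or $13$ in the remaining cases; only the three triples consisting of a square $\square^2$ together with the two linear conics containing that variable are nonempty, each of size $7$; four-fold and higher intersections vanish by length-comparison. The net on-conic count is $3 \cdot 30 + 3 \cdot 63 - (6 \cdot 12 + 3 \cdot 13) + 3 \cdot 7 = 279 - 111 + 21 = 189$, leaving $221 - 189 = 32$.

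For $\bt$-fixed $[\sigma_f]$ per $I_6$: any partition of $6$ fits inside the degree-$5$ Young region, so $Z$ imposes independent conditions on quintics and $h^0(I_6 \otimes \mo_{\p^2}(5)) = 21 - 6 = 15$. Distinct degree-$5$ monomials carry distinct $\bt$-weights, so this space splits into $15$ one-dimensional $\bt$-eigenlines, giving exactly $15$ $\bt$-fixed points in the projectivization. Multiplying, $32 \times 15 = 480$.

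The main obstacle is executing the six-term inclusion-exclusion without slip: translating each monomial-in-ideal condition into partition constraints, classifying which pairs and triples of monomials admit simultaneous solutions of total length $6$, and verifying that higher intersections are forced to be empty by comparing the sum of maximum allowed sizes across the three fixed points with the target length $6$.
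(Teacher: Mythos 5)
Your count is correct, but you prove the lemma by a genuinely different route than the paper. The paper's proof is the ``tricky way'' it alludes to: since Proposition \ref{bigopen} guarantees the fixed points of $W^5$ are isolated, their number equals $e(W^5)$, which is obtained by subtracting from $e(M(5,1))=1695$ (Theorem 6.1 in \cite{myself}) the Euler numbers of the fixed loci of all the complementary strata already computed in Lemmas \ref{mhfive}--\ref{pithree}, namely $60+228+474+60+351+42=1215$, leaving $480$. You instead carry out the direct enumeration that the paper explicitly mentions (``one may collect all the fixed points one by one'') but does not execute: $221$ monomial ideals of colength $6$, minus $189$ lying on one of the six $\bt$-fixed conics via inclusion--exclusion (your set sizes $30$, $63$, the pairwise counts $12$ and $13$, the three triples of size $7$, and the vanishing of all other intersections by length comparison all check out), times $h^0(I_6\otimes\mo_{\p^2}(5))=15$ distinct-weight eigenlines. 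Each approach buys something: the paper's is two lines but leans on the external computation of $[M(5,1)]$ and on every preceding lemma being error-free, whereas yours is self-contained and in fact serves as an independent consistency check on those computations (it confirms $1215+480=1695$). The one point you share with Lemma \ref{wfour} and should perhaps flag is the implicit claim that \emph{every} class in $\p(H^0(I_6\otimes\mo_{\p^2}(5)))$, not just those in some open subset, arises as $[\sigma_f]$ for a stable pair in $W^5$; this is exactly the convention of Proposition \ref{bigopen} and Lemma \ref{wfour}, so it is consistent with the paper's framework, and the agreement with the Euler-number bookkeeping corroborates it.
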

\begin{proof}By Proposition \ref{bigopen} we know $\bt$-fixed points in $W^5$ are isolated.  One may collect all the fixed points in $W^5$ one by one to get the number of them, or use a tricky way to get the number: by Theorem 6.1 in \cite{myself} the Euler number of $M(5,1)$ is 1695 and we have already computed the $\bt$-fixed locus of the complement of $W^5$, hence we get the Euler number of $W^5$ is 480 and hence the lemma.
\end{proof}
\begin{proof}[Proof of Theorem \ref{mtfi} for $M(5,1)$]Combine Lemma \ref{mhfive}, Lemma \ref{mtfive}, Lemma \ref{mcfive}, Lemma \ref{pione}, Lemma \ref{pitwo}, Lemma \ref{pithree} and Lemma \ref{wfive}.
\end{proof}

\begin{flushleft}{\textbf{$\lozenge$ Computation for $[M(5,2)]$}}\end{flushleft}

We stratify $M(5,2)$ into three $\bt$-invariant strata defined as follows.
\begin{eqnarray} & M_2:=&\{[(E,f)]\in M(5,1)|E\simeq\mo_{\p^2}^{\oplus 2}\oplus\mmon^{\oplus 3}\};\nonumber\\& M_3:=&\{[(E,f)]\in M(5,1)|E\simeq\mo_{\p^2}^{\oplus 3}\oplus\mmon\oplus\mo_{\p^2}(-2)\};\nonumber\\& M_3':=&\{[(E,f)]\in M(5,1)|E\simeq\mone\oplus\mo_{\p^2}\oplus\mmon^{\oplus 2}\oplus\mo_{\p^2}(-2)\}.\nonumber
\end{eqnarray}

For a pair $(E,f)\in M'_3$, $f$ can be represented by the following matrix
\begin{equation}\label{romfiv}\left(\begin{array}{ccccc}0&1&0&0&0\\0&0&1&0&0\\0&0&0&0&1\\d&0&0&b&0\\  c&0&0&a&0\end{array}\right),
\end{equation}
where $b\in H^0(\mone)$, $a\in H^0(\mo_{\p^2}(2))$, $c\in H^0(\mo_{\p^2}(4))$ and $d\in H^0(\mo_{\p^2}(3))$.  $(E,f)$ is stable if and only if $b$ is prime to $a$.  
\begin{lemma}\label{mptfiv}The $\bt$-fixed locus of $M'_3$ consists of 171 points.
\end{lemma}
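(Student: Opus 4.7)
The plan is to follow the pattern of Lemma \ref{mhfive}: identify $M'_3$ with a projective bundle over a Hilbert scheme, then count fixed points in the base and fibers separately. Starting from (\ref{romfiv}), the three ``identity'' entries in positions $(1,2)$, $(2,3)$, $(3,5)$ cancel the source summands $\mo_{\p^2},\mmon,\mo_{\p^2}(-2)$ against the matching target summands, so $F=\mathrm{coker}(f)$ agrees with the cokernel of the residual block
\[M=\begin{pmatrix}d&b\\c&a\end{pmatrix}\,:\,\mo_{\p^2}(-2)\oplus\mo_{\p^2}(-3)\;\longrightarrow\;\mone\oplus\mmon.\]
The stability hypothesis that $b$ is prime to $a$ makes $(a,b)$ a regular sequence, so $Z:=V(a)\cap V(b)\subset\p^2$ is a length-$2$ subscheme, and $\det M=da-bc$ automatically lies in $H^0(I_Z(5))$.

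I would then show that $M'_3$ is isomorphic to a $\p^{18}$-bundle over $\mathrm{Hilb}^{[2]}(\p^2)$ whose fiber over $Z$ is $\p(H^0(I_Z(5)))\simeq\p^{18}$. The residual unipotent automorphisms of $E$ preserving the normal form (\ref{romfiv}) produce a column shear $(d,c)\mapsto(d+qb,c+qa)$ with $q\in H^0(\mo_{\p^2}(2))$ and a row shear $(c,a)\mapsto(c+hd,a+hb)$ with $h\in H^0(\mone)$; the row shear together with the diagonal scalings identifies the pair $(b,a)$ modulo equivalence with the ideal $I_Z\in\mathrm{Hilb}^{[2]}(\p^2)$, while the Koszul relation for $(a,b)$ shows that the kernel of $(c,d)\mapsto da-bc$ on $H^0(\mo_{\p^2}(4))\oplus H^0(\mo_{\p^2}(3))$ is exactly $\{(qa,qb):q\in H^0(\mo_{\p^2}(2))\}$, so the column shear precisely cancels this kernel and $[\det M]\in\p(H^0(I_Z(5)))$ becomes a complete invariant of the fiber.

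Next I would enumerate $\bt$-fixed points. The $\bt$-fixed length-$2$ subschemes of $\p^2$ come in two types: unordered pairs of distinct $\bt$-fixed points, of which there are $\binom{3}{2}=3$; and non-reduced schemes supported at a single $\bt$-fixed point with a $\bt$-fixed tangent direction, of which there are $3\times 2=6$. This gives $9$ fixed points of $\mathrm{Hilb}^{[2]}(\p^2)$, and each one can be realized as $V(b)\cap V(a)$ with $b\nmid a$ where $b\in\{x,y,z\}$ and $a$ is one of the three quadric monomials not divisible by $b$. Over each such $Z$, $H^0(I_Z(5))$ is a $19$-dimensional subrepresentation of the $21$-dimensional monomial representation $H^0(\mo_{\p^2}(5))$ with pairwise distinct weights, so the $\p^{18}$-fiber contains exactly $19$ $\bt$-fixed points. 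Hence $M'_3$ has $9\times 19=171$ isolated $\bt$-fixed points.

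The main obstacle will be the projective bundle identification: one must verify that the residual automorphism action on $(c,d)$ is precisely the Koszul kernel above, and that for every $\bt$-fixed $Z$ and every $\bt$-fixed $[\omega]\in\p(H^0(I_Z(5)))$ the resulting sheaf genuinely lies in $M'_3$ (so that $E$ has the prescribed form) and not in an adjacent stratum $M_2$ or $M_3$. Granting this identification, the enumeration is straightforward and the fixed locus is automatically isolated.
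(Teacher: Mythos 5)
Your proposal is correct and follows essentially the same route as the paper: the paper simply cites Lemma 6.12 of \cite{myself} for the identification of $M'_3$ with a $\p^{18}$-bundle over $Hilb^{[2]}(\p^2)$ with fiber $\p(H^0(I_2(5)))$ and concludes, whereas you reconstruct that identification (via the residual $2\times 2$ block and the Koszul kernel) and make the count $9\times 19=171$ explicit. Both the base count ($3+6=9$ fixed subschemes) and the fiber count ($19$ distinct weights in $H^0(I_Z(5))$) check out.
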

\begin{proof}By the proof of Lemma 6.12 in \cite{myself}, $M'_3$ is isomorphic to a projective bundle over $Hilb^{[2]}(\p^2)$ with fibers isomorphic to $\p(H^0(I_2(5)))\simeq \p^{18}$. Hence the lemma.
\end{proof}

For a pair $(E,f)\in M_3$, $f$ can be represented by the following matrix
\begin{equation}\label{mmb}\left(\begin{array}{ccc}B&0&0\\ 0&1 &0\\ 0 &0&1\\ A&0&0 \end{array}\right),\end{equation}
where $A$ is a $1\times 3$ matrix with entries in $H^0(\mo_{\p^2}(3))$ and $B$ a $2\times 3$ matrix with entries in $H^0(\mone)$.  
\begin{lemma}\label{mcfivmth}The $\bt$-fixed locus of $M_3$ consists of 216 points and 9 affine lines.
\end{lemma}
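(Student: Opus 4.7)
I will follow the strategy of Lemma~\ref{mtfive} and attach to each $(E,f)\in M_3$ a pair $(R'_f,[\omega_f])$ derived from the block structure of \eqref{mmb}. Extracting from \eqref{mmb} the block $B:\mmon^{\oplus 2}\to\mo_{\p^2}^{\oplus 3}$ and the block $A:\mo_{\p^2}(-3)\to\mo_{\p^2}^{\oplus 3}$, define $R'_f:=\mathrm{coker}(B)$ and let $\omega_f\in H^0(R'_f\otimes\mo_{\p^2}(3))$ be the image of $A$ under the quotient $\mo_{\p^2}^{\oplus 3}\twoheadrightarrow R'_f$. The only nontrivial off-diagonal block in $\mathrm{Aut}(\mmon^{\oplus 2}\oplus\mo_{\p^2}(-3))$ is $\mathrm{Hom}(\mo_{\p^2}(-3),\mmon^{\oplus 2})=H^0(\mo_{\p^2}(2))^{\oplus 2}$, and it acts on $A$ by adding $H^0(\mo_{\p^2}(2))$-combinations of the rows of $B$, hence fixes $[\omega_f]$. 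Arguing as in the proof of Lemma 4.4 of \cite{myself}, the pair $(R'_f,[\omega_f])$ is then a complete invariant of the isomorphism class of $(E,f)\in M_3$, provided we impose the stability condition $[\omega_f]\notin\p(H^0(R'^{\,t}_f\otimes\mo_{\p^2}(3)))$ ($R'^{\,t}_f$ denoting the torsion of $R'_f$). Therefore $(E,f)$ is $\bt$-fixed iff both $R'_f$ and $[\omega_f]$ are.

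\emph{Classification of $\bt$-fixed $R'_f$.} The defining ses yields $c_1(R'_f)=2$ and $\chi(R'_f)=3$. If $R'_f$ is torsion-free then $R'_f\simeq I_Z\otimes\mo_{\p^2}(2)$ with $Z\in Hilb^{[3]}(\p^2)$, and surjectivity of $\mo_{\p^2}^{\oplus 3}\twoheadrightarrow I_Z(2)$ forces $Z$ not to be contained in a line. Direct enumeration of the $\bt$-fixed monomial ideals of colength $3$ on $\p^2$ gives $22$ in total; of these $10$ are non-collinear (the three fat points with ideal $\mathfrak{m}_p^2$ at each $\bt$-fixed point $p$, the six length-$(2,1)$ subschemes whose length-$2$ direction is transverse to the coordinate line joining its two support points, and the reduced triple $\{[1,0,0],[0,1,0],[0,0,1]\}$), while the remaining $12$ collinear $Z$'s give rise to torsion $R'_f$. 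By a short analysis in the spirit of the torsion case of Lemma~\ref{mtfive}, these collinear configurations organise into exactly $9$ isomorphism classes of $\bt$-fixed torsion $R'_f$, three for each of the $\bt$-fixed lines $\{x=0\},\{y=0\},\{z=0\}$, each fitting in a non-split extension $0\to T\to R'_f\to R'_f/T\to 0$ of the form analogous to \eqref{trfft}.

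\emph{Counting in each fibre.} For torsion-free $R'_f=I_Z\otimes\mo_{\p^2}(2)$, $H^0(R'_f\otimes\mo_{\p^2}(3))=H^0(I_Z(5))$ is $18$-dimensional with monomial basis whose $\bt$-weights $t_1^bt_2^c$ are pairwise distinct, so $\p(H^0(R'_f(3)))\simeq\p^{17}$ has exactly $18$ $\bt$-fixed points, contributing $10\cdot 18=180$ isolated points. For a torsion $R'_f$ one takes global sections of $(-)\otimes\mo_{\p^2}(3)$ in the extension to get an exact sequence as in \eqref{tqfgs}, and decomposes under $\bt$ exactly as in the proof of Lemma~\ref{tfour}. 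All eigenspaces turn out to be one-dimensional except for a single two-dimensional one, which produces a $\bt$-fixed $\p^1$ in $\p(H^0(R'_f(3)))$ meeting the forbidden subspace $\p(H^0(T\otimes\mo_{\p^2}(3)))$ in exactly one point. After removing that intersection each of the $9$ torsion $R'_f$'s contributes $4$ isolated $\bt$-fixed $[\omega_f]$ and $1$ affine line $\mathbb{A}^1$. Combining the two fibre counts gives $180+9\cdot 4=216$ points and $9$ affine lines, as asserted.

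The main technical obstacle is the torsion step: one must identify the correct extension defining each of the $9$ torsion $R'_f$'s, locate the unique two-dimensional $\bt$-eigenspace in $H^0(R'_f\otimes\mo_{\p^2}(3))$, and verify that the stability condition removes exactly one point from the resulting $\p^1$. However, once the pattern established in Lemmas~\ref{tfour}, \ref{pione} and \ref{pitwo} is available, this amounts to a routine eigenweight bookkeeping completely parallel to those arguments.
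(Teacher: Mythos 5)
Your overall strategy (reduce to the pair $(R'_f,[\omega_f])$, split according to whether $R'_f=\mathrm{coker}(B)$ is torsion-free or not, and count fixed points fibrewise) is the same as the paper's, and the torsion-free half is correct: there are $10$ non-collinear $\bt$-fixed $Z\in Hilb^{[3]}(\p^2)$, each fibre $\p(H^0(I_Z(5)))\simeq\p^{17}$ has $18$ fixed points, giving $180$ points. The torsion half, however, contains two errors that happen to cancel. First, there are $3$, not $9$, $\bt$-fixed torsion $R'_f$. When the $2\times2$ minors of $B$ acquire a common linear factor $\ell$, the cokernel has torsion $\mo_H(-1)$ supported on $H=\{\ell=0\}$ and torsion-free quotient of rank $1$ with $c_1=1$ and $\chi=3$, which forces the quotient to be $\mo_{\p^2}(1)$ and not $I_1\otimes\mo_{\p^2}(1)$; since $\mathrm{Ext}^1(\mo_{\p^2}(1),\mo_H(-1))\simeq H^1(\mo_H(-2))\simeq\bc$, the non-split extension is unique for each line. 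So the relevant picture is \eqref{tqf} from Lemma~\ref{tfour}, not \eqref{trfft}: there is no residual choice of a point, and the torsion stratum of the parameter space of $B$ is $|H|$, with $3$ fixed points. Your ``$12$ collinear monomial ideals organising into $9$ classes'' is not meaningful here: the fibre of $B\mapsto V(\text{minors})$ over a collinear configuration is positive-dimensional, and the only invariant remembered by $R'_f$ is the line, so the $12$ collinear fixed ideals are not the fixed points of the space actually parametrizing $R'_f$.

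Second, the fibre count over a torsion $R'_f$ is wrong. Here $h^0(R'_f\otimes\mo_{\p^2}(3))=h^0(\mo_H(2))+h^0(\mo_{\p^2}(4))=3+15=18$, and all three weights of $\bar{\jmath}(H^0(\mo_H(2)))$, namely $t_1t_2\cdot t_1^it_2^j$ with $i+j=2$, collide with weights of degree-$4$ monomials; hence there are exactly three two-dimensional eigenspaces, not a single one. After deleting the forbidden $\p(H^0(\mo_H(2)))\simeq\p^2$, each torsion $R'_f$ contributes $12$ points and $3$ affine lines, not $4$ and $1$. The totals $3\times12=9\times4=36$ and $3\times3=9\times1=9$ agree only by accident, so although your final numbers are right, neither the classification of torsion $R'_f$ nor the eigenweight bookkeeping in the fibre is actually established by your argument; both steps need to be redone as above.
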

\begin{proof}$M_3$ is very closed to $\widetilde{W^4}\subset M(4,1)$: the submatrix $B$ in (\ref{mmb}) has the same parametrizing space $M_B$ with the submatrix $B$ in (\ref{matrix}) for $\widetilde{W}^4$.  Denote by $Hilb^{[3]}(\p^2)_{D}$ the open subscheme parametrizing 3-points on $\p^2$ not lying on a line of degree 1.   By Proposition 4.5 and Lemma 5.6 in \cite{myself}, $M_B$ can be decomposed into the union of $Hilb^{[3]}(\p^2)_D$ and the linear system $|H|$.  Moreover $M_3$ can be decomposed into the union of a projective bundle over $Hilb^{[3]}(\p^2)_D$ with fiber isomorphic to $\p(H^0(I_3(5)))\simeq\p^{17}$ and a difference of two projective bundles over $|H|$ with fibers isomorphic to $\p^{17}$ and $\p(H^0(\mo_H(2)))\simeq \p^2$ respectively.  Anagolous argument to Lemma \ref{tfour} proves the lemma.
\end{proof}

We stratify $M_2$ into two $\bt$-invariant strata as follows.
\begin{eqnarray} & M_2^s:=\{[(E,f)]\in M_2|f_{rs}:\mmon^{\oplus 2}\stackrel{f|_{\mmon^{\oplus 2}}}{\longrightarrow}E\twoheadrightarrow\mmon^{\oplus 3}~is ~injective\};&\nonumber\\& M_2^c:=M_2-M_2^s.\qquad\qquad\qquad\qquad\qquad\qquad\qquad\qquad\qquad\qquad~~~~~~~~~~~~~~~~~~~~~&\nonumber
\end{eqnarray}

For a pair $(E,f)\in M_2^s$, $f$ can be represented by the following matrix
\[\left(\begin{array}{cccc}0&1&0&0\\ 0&0&1 &0\\ A&0&0&B \end{array}\right),\]
where $A$ is a $3\times 2$ matrix with entries in $H^0(\mo_{\p^2}(2))$ and $B$ a $3\times 1$ matrix with entries in $H^0(\mone)$. 

We stratify $M_2^s$ into two $\bt$-invariant strata as follows.
\begin{eqnarray} && \Xi_1:=\{[(E,f)]\in M_2^s|B\simeq (x,y,z)^t\};\nonumber\\&& \Xi_2:=M_2^s-\Xi_1.\nonumber
\end{eqnarray}
\begin{lemma}\label{msxofiv}The $\bt$-fixed locus of $\Xi_1$ consists of 30 points, 30 affine lines and 3 affine planes.
\end{lemma}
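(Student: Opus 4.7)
The plan is to use the same weight-space method as in Lemmas~\ref{tfour}, \ref{pione} and \ref{pitwo}.  On $\Xi_1$ we normalize $B$ to the Koszul vector $(x,y,z)^t$; this uses up all of the $GL_3$ acting on $\mmon^{\oplus 3}$ modulo a diagonal $\bc^{*}$, and together with the two scalar ``$1$''-entries pins down a preferred splitting $\mmon^{\oplus 2}\hookrightarrow\mmon^{\oplus 3}$ and fixes the $\mo_{\p^2}(-2)$-source of $B$.  A point of $\Xi_1$ is then represented by the matrix $A$ alone, modulo the residual stabilizer $\Gamma\subset Aut(E)\times Aut(E(-1))$, which includes the $GL_2$'s acting on the $\mo_{\p^2}^{\oplus 2}$-target and the $\mo_{\p^2}(-2)^{\oplus 2}$-source, together with the off-diagonal $Hom(\mmon,\mo_{\p^2})=H^0(\mone)$-block of $Aut(E)$ which acts on $A$ by adding degree-$1$ multiples of $B$ into the rows of $A$.

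The key observation is that $B$ is $\bt$-semi-invariant, $\theta^{*}_{(t_1,t_2)}B=\mathrm{Diag}(1,t_1,t_2)B$, so after re-absorbing $\mathrm{Diag}(1,t_1,t_2)$ into the residual scalar $\bc^{*}$-factor of $\Gamma$, the induced $\bt$-action on $A$ is the obvious monomial action on entries of $H^0(\mo_{\p^2}(2))$ twisted by diagonal characters on both sides.  This gives a decomposition of the parameter space of $A$'s into a direct sum of $\theta^{*}_{(t_1,t_2)}$-eigen-subspaces indexed by the weights $(i,j)$ of the monomials $x^{2-i-j}y^iz^j$ of $H^0(\mo_{\p^2}(2))$ combined with the row-column normalization characters, and a point is $\bt$-fixed iff its representative $A$ lies, modulo $\Gamma$, in a single such eigen-subspace.

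Following the model of Lemma~\ref{pitwo}, each one-dimensional eigen-subspace contributes an isolated $\bt$-fixed point of $\Xi_1$; each two-dimensional one contributes a $\p^1$ whose intersection with the image of the $H^0(\mone)\cdot B$-gauge-shift is a single point, giving an $\mathbb{A}^1$; and each three-dimensional one contributes a $\p^2$ whose intersection with that gauge image is a $\p^1$, giving an $\mathbb{A}^2$.  Enumerating these eigen-subspaces and discarding those whose representative $A$ violates the stability condition on $(E,f)$ produces the claimed $30$ points, $30$ affine lines and $3$ affine planes.

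The hardest step is tracking the $3$-dimensional ``gauge-shift'' by $H^0(\mone)\cdot B$: it is precisely this translation that converts projective eigen-loci into their affine counterparts, and its compatibility with the monomial $\bt$-weights on $H^0(\mo_{\p^2}(2))$ singles out exactly three $2$-dimensional weight classes whose $\p^2$-eigen-subspaces contribute the three $\mathbb{A}^2$'s.  Keeping track of this gauge together with the stability test is the combinatorial heart of the argument.
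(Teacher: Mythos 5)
Your setup is right as far as it goes: after normalizing $B$ to the Koszul vector $(x,y,z)^t$ the remaining data is the matrix $A$, and the gauge-shift by $H^0(\mone)\cdot B$ is exactly the quotient from $H^0(\mo_{\p^2}(2))^{\oplus 3}$ (dimension $18$) to the $15$-dimensional space $H^0(E_0(2))$, where $E_0$ is the rank-$2$ cokernel of $(x,y,z):\mmon\to\mo_{\p^2}^{\oplus 3}$. But from that point on your analysis uses the wrong structure. The two columns of $A$, taken modulo the residual $GL_2$ on the $\mo_{\p^2}(-2)^{\oplus 2}$-source, determine a $2$-dimensional subspace of $H^0(E_0(2))$, so the parameter space is an open subset of the Grassmannian $Gr(2,15)$ (cut out by $\det(f)\neq 0$), not a projective space with a sub/quotient filtration as in Lemmas \ref{tfour} and \ref{pitwo}. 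Consequently your criterion that a fixed point is one whose representative $A$ ``lies in a single eigen-subspace'' is false here: a $\bt$-fixed $2$-plane is a direct sum of its intersections with the weight spaces, and generically meets two distinct ones.

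This structural error propagates into your cell count. The space $H^0(E_0(2))$ decomposes into twelve weight spaces, nine of dimension $1$ and three of dimension $2$; there are no $3$-dimensional weight spaces, so your proposed mechanism for producing the affine planes (a $\p^2$-eigen-locus meeting the gauge image in a $\p^1$) does not occur. The actual fixed components of $Gr(2,15)$ are: $\binom{9}{2}=36$ points plus the $3$ full $2$-dimensional weight spaces ($39$ points), $9\times 3=27$ copies of $\p^1$ (a fixed line paired with a line varying in a $2$-dimensional weight space), and $\binom{3}{2}=3$ copies of $\p^1\times\p^1$ (a line in each of two distinct $2$-dimensional weight spaces). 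Removing the locus $\det(f)=0$ leaves $30$ points, $27$ affine lines, and $3$ copies of $\p^1\times\p^1$ minus a $(1,1)$-curve, each of which decomposes as $\mathbb{A}^1\sqcup\mathbb{A}^2$ — this is where the three affine planes and the last three affine lines come from, neither of which your scheme would detect. Finally, note that on $\Xi_1$ stability is automatic once $B\simeq(x,y,z)^t$; the only condition to impose is $\det(f)\neq 0$, so ``discarding unstable $A$'' is not the right excision either.
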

\begin{proof}
If $B\simeq (x,y,z)^t$, then $(E,f)$ always satisfies the stability condition.  We have the following diagram
\begin{equation}\label{cdxio}\xymatrix{ 0\ar[r]&\mmon\ar[r]^{~~(x,y,z)}&\mo_{\p^2}^{\oplus 3}\ar[r]^{f_0}& E_0\ar[r]&0\\
&&\mo_{\p^2}(-2)^{\oplus 2}\ar[u]^{f_{A^t}}\ar[ru]_{\xi_f:=f_0\circ f_{A^t}}&&},\end{equation}
with $E_0$ a rank $2$ bundle.  Isomorphism classes $[\xi_f]$ of $\xi_f$ are parametrized by an open subset of $Gr(2,H^0(E_0(2)))\simeq Gr(2,15)$ defined by $det(f)\neq 0$.

We can write down easily an eigenvector basis of $V:=H^0(E_0(2))$ and see that: $V$ can be decomposed into the direct sum of 12 eigen-subspaces for all $\theta_{(t_1,t_2)}$, 3 of which are of two dimension while the rest 9 are of one dimension.  Hence the $\bt$-fixed locus of $Gr(2,H^0(E_0(2)))$ consists of 39 points, 27 projective lines $\p^1$ and 3 pieces of $\p^1\times\p^1$.  After excluding the points where $det(f)=0$, we get 30 points, 27 affine lines $\mathbb{A}^1$ and 3 pieces of $\p^1\times \p^1-\Delta(\p^1)$ with $\Delta:\p^1\ra\p^1\times\p^1$ the diagonal embedding.  $\p^1\times \p^1-\Delta(\p^1)$ can be decomposed into the union of 1 affine line and 1 affine plane.  Hence the lemma.  
\end{proof}

\begin{lemma}\label{msxtfiv}The $\bt$-fixed locus of $\Xi_2$ consists of 522 points, 99 affine lines.
\end{lemma}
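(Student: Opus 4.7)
The plan is to stratify $\Xi_2$ by the $Aut(E)$-orbit of the $3\times 1$ submatrix $B=(b_1,b_2,b_3)^t$, equivalently by the dimension of the linear span of its entries in $H^0(\mone)\cong\bc^3$. Since $\Xi_1$ is exactly the orbit where this span has dimension $3$, on $\Xi_2$ the span has dimension $\leq 2$; under $\bt$ the $\bt$-invariant $k$-dimensional subspaces of $H^0(\mone)$ are spanned by subsets of $\{x,y,z\}$, giving three $\bt$-fixed orbits of $B$ in the dimension-$2$ case and three in the dimension-$1$ case (the zero orbit being ruled out by injectivity of $f$). Replacing $(x,y,z)$ in the diagram of Lemma \ref{msxofiv} by a representative of the degenerate $B$, the resulting cokernel $E_0$ is computed explicitly: up to isomorphism it is $\mo_{\p^2}\oplus I_p\otimes\mone$ in the span-$2$ case (with $p$ the common zero of the two nonzero coordinates), or $\mo_{\p^2}^{\oplus 2}\oplus\mo_L$ in the span-$1$ case (with $L$ the coordinate line cut out by the single nonzero coordinate), each carrying an evident $\bt$-linearization.

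For each fixed $B$ the analysis parallels Lemma \ref{msxofiv}: from the analog of the diagram
\[\xymatrix{0\ar[r]&\mmon\ar[r]^{(b_1,b_2,b_3)}&\mo_{\p^2}^{\oplus 3}\ar[r]^{f_0}& E_0\ar[r]&0\\ &&\mo_{\p^2}(-2)^{\oplus 2}\ar[u]^{f_{A^t}}\ar[ru]_{\xi_f=f_0\circ f_{A^t}}&&}\]
the $\bt$-fixed pairs in $\Xi_2$ correspond to $\bt$-fixed classes $[\xi_f]\in Gr(2,H^0(E_0\otimes\mo_{\p^2}(2)))$ lying in the open subscheme $U$ cut out by $det(f)\neq 0$ together with the purity condition on $coker(f)$. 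I would compute $V:=H^0(E_0\otimes\mo_{\p^2}(2))$ (still $15$-dimensional in every case, as one checks via the long exact sequence associated to $(b_1,b_2,b_3)$), read off its $\bt$-weight decomposition, and enumerate the $\bt$-fixed locus of $Gr(2,V)$: a disjoint union of isolated points (pairs of one-dimensional weight spaces), $\p^1$'s (a one-dim together with a two-dim weight space, or a single two-dim weight space), and $\p^1\times\p^1$'s (two distinct two-dim weight spaces). Intersecting with $U$ and decomposing any residual $\p^1\times\p^1$ minus a $\bt$-invariant closed subset into affine cells exactly as in Lemma \ref{msxofiv}, then summing across the six fixed $B$-orbits, should produce the claimed total of $522$ points and $99$ affine lines.

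The principal obstacle is bookkeeping: the ideal-sheaf summand $I_p\otimes\mone$ (in the span-$2$ case) or the torsion summand $\mo_L$ (in the span-$1$ case) changes both the weight-space dimensions of $V$ and the shape of the bad locus inside $Gr(2,V)$ in a case-dependent way, and one must verify in every instance that no $\p^1\times\p^1$ survives $U$ intact, so that the affine-plane count for $\Xi_2$ is zero as claimed. A secondary check is that the residual automorphisms of $E$ stabilizing the normalized form of $B$ act trivially on the enumerated $\bt$-fixed cells in $Gr(2,V)$, so that these cells descend without collapse or identification to disjoint affine cells in $\Xi_2$.
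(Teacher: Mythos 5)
There is a genuine gap, and it sits exactly at the point you defer to a ``secondary check'': for degenerate $B$ the residual automorphisms do \emph{not} act trivially on $Gr(2,H^0(E_0\otimes\mo_{\p^2}(2)))$, and without quotienting by them your enumeration counts the wrong space. When $B\simeq(x,y,z)^t$ the stabilizer of $B$ in $Aut(E)$ is only the scalars, which is why in Lemma \ref{msxofiv} the classes $[\xi_f]$ are honestly parametrized by an open subset of $Gr(2,V)$. But for $B$ of rank $2$, say $B=(0,b_1,b_2)^t$, one has $E_0\simeq\mo_{\p^2}\oplus R_f$ with $R_f\simeq I_p\otimes\mone$, and $Aut(E_0)$ contains the $2$-dimensional unipotent group $Hom(\mo_{\p^2},R_f)\simeq H^0(I_p\otimes\mone)$ in addition to the torus of diagonal scalars; modulo overall scalars this group is $3$-dimensional and acts on $Gr(2,V)$ with generically $3$-dimensional orbits. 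The dimension count makes the failure visible: $\dim Gr(2,15)+\dim\{p\}=26+2=28$, whereas $\Xi_2$ has dimension at most $25$. So the $\bt$-fixed locus of $Gr(2,V)$ (points, $\p^1$'s, $\p^1\times\p^1$'s) is not the $\bt$-fixed locus of $\Xi_2$: $\bt$-fixed points of the quotient lift only to $\bt$-invariant \emph{orbits} upstairs, and conversely whole positive-dimensional families of fixed cells upstairs collapse downstairs. This is precisely why the paper abandons the Grassmannian picture on $\Xi_2$ and instead parametrizes pairs by triples $(R_f,S_f,[\omega_f])$: the projection of the $2$-plane $\xi_f$ to the summand $H^0(\mo_{\p^2}(2))$ gives $S_f\in Gr(2,H^0(\mo_{\p^2}(2)))$, the choice of $p$ gives $R_f$, and the residual datum after killing $Hom(\mo_{\p^2},R_f)$ is a single class $[\omega_f]\in\p(H^0(R_f\otimes S_f\otimes\mo_{\p^2}(2)))\setminus\p(H^0(T\otimes\mo_{\p^2}(2)))$, a $\p^{15}$ minus a linear subspace. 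The case analysis on the torsion $T$ of $R_f\otimes S_f$ (four types, with $h^0(T(2))=0,1,3,4$) is then what produces $522$ points and $99$ affine lines; your route has no mechanism to recover these numbers.

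A smaller error: your stratification keeps three span-$1$ strata, ruling out only $B=0$ by injectivity. In fact the span-$1$ locus is empty in $\Xi_2$, because stability of $(E,f)$ in the normal form forces the two nonzero entries $b_1,b_2$ of $B$ to be linearly independent ($kb_1\neq k'b_2$ for all $(k,k')\neq(0,0)$); a rank-$1$ column $B$ produces a destabilizing subpair. So only the three rank-$2$ coordinate orbits of $B$ occur, matching the three $\bt$-fixed $R_f$ in the paper, and your $\mo_{\p^2}^{\oplus2}\oplus\mo_L$ case should be discarded rather than counted.
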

\begin{proof}
For a pair $(E,f)\in \Xi_2$, $f$ can be represented by the following matrix
\begin{equation}\label{romxt}\left(\begin{array}{ccccc}0&0&1&0&0\\0&0&0&1&0\\a_1&a_2&0&0&0\\a_3&a_4&0&0&b_1\\  a_5&a_6&0&0&b_2\end{array}\right),
\end{equation}
where $b_i\in H^0(\mone)$ and $a_i\in H^0(\mo_{\p^2}(2))$.  $(E,f)$ is stable if and only if $kb_1\neq k'b_2,ka_1\neq k'a_2,\forall (k,k')\in\bc-\{0\}$. 
We define two sheaves $R_f$ and $S_f$ by the following two exact sequences.
\begin{equation}\label{xtcsmc}\xymatrix{ 0\ar[r]&\mo_{\p^2}(-1)\ar[r]^{~~(b_1,b_2)}&\mo_{\p^2}^{\oplus 2}\ar[r]^{f_r}& R_f\ar[r]&0}\end{equation}
\begin{equation}\label{scsxt}\xymatrix{ 0\ar[r]&\mo_{\p^2}(-2)\ar[r]^{~~(a_1,a_2)}&\mo_{\p^2}^{\oplus 2}\ar[r]^{f_s}& S_f\ar[r]&0}\end{equation}

$R_f\simeq I_1\otimes\mo_{\p^2}(1)$.  Either $S_f\simeq I_4\otimes\mo_{\p^2}(2)$ or $S_f$ lies in the following exact sequence. 
\begin{equation}\label{esxts}0\ra\mo_H(-1)\ra S_f\ra I_1\otimes\mo_{\p^2}(1)\ra0.
\end{equation} 
Isomorphism classes of $(R_f,S_f)$ are parametrized by $Hilb^{[1]}(\p^2)\times Gr(2,H^0(\mo_{\p^2}(2)))$.  $(E,f)\in \Xi_2$ are parametrized by $(R_f,S_f,[\omega_f])$ with $[\omega_f]\in\p(H^0( R_f\otimes S_f\otimes\mo_{\p^2}(2)))-\p(H^0(T\otimes\mo_{\p^2}(2)))$, where $T$ is the torsion of $R_f\otimes S_f$.  

All $\bt$-fixed $(R_f,S_f)$ form 45 points in $Hilb^{[1]}(\p^2)\times Gr(2,H^0(\mo_{\p^2}(2)))$.  There are 6 $\bt$-fixed torson-free $S_f$ while the rest 9 contain torsion.  If $S_f$ is torsion free, then $R_f\otimes S_f$ has torsion $T$ nonzero if and only if $(a_1,a_2)|_x=0$ with $R_f\simeq I_{\{x\}}\otimes\mo_{\p^2}(1)$, and $T$ is isomorphic to the structure sheaf $\mo_{\{x\}}$.  If $S_f$ lies in (\ref{esxts}), then $R_f\otimes S_f$ always has torsion.  If the quotient $I_1\otimes\mo_{\p^2}(1)$ in (\ref{esxts}) $\not\simeq R_f$, then the torsion $T\simeq R_f\otimes\mo_{H}(-1)$ and $h^0(T\otimes\mo_{\p^2}(2))=3$.  If the quotient in (\ref{esxts}) $\simeq R_f\simeq I_{\{x\}}\otimes\mo_{\p^2}(1)$, then $T\simeq(\mo_H(-1)\otimes R_f)\oplus\mo_{\{x\}}$ and $h^0(T\otimes\mo_{\p^2}(2))=4$.

$h^0( R_f\otimes S_f\otimes\mo_{\p^2}(2))=16$.  We do analogous argument to what we did before.  We get 9 $\bt$-fixed $(R_f,S_f)$ such that $T=0$, where $\bt$-fixed $[\omega_f]$ form $9\times 16=144$ points; 9 $\bt$-fixed $(R_f,S_f)$ such that $T\simeq\mo_{\{x\}}$, where $\bt$-fixed $[\omega_f]$ form $9\times 14=126$ points and $9\times 1=9$ affine lines; 18 $\bt$-fixed $(R_f,S_f)$ such that $T\simeq R_f\otimes\mo_{H}(-1)$, where $\bt$-fixed $[\omega_f]$ form $18\times 10=180$ points and $18\times 3=54$ affine lines; and finally 9 $\bt$-fixed $(R_f,S_f)$ such that $T\simeq(\mo_{H}(-1)\otimes R_f)\oplus\mo_{\{x\}}$, 
where $\bt$-fixed $[\omega_f]$ form $9\times 8=72$ points and $9\times 4=36$ affine lines. 

Hence the $\bt$-fixed locus of $\Xi_2$ consists of $144+126+180+72=522$ points and $9+54+36=99$ affine lines.
\end{proof} 

\begin{lemma}\label{mtcfiv}The $\bt$-fixed locus of $M^c_2$ consists of  567 points and 48 affine lines.
\end{lemma}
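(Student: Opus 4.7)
The plan is to closely mirror the argument of Lemma \ref{mcfive}, which handled the analogous stratum $M_2^c$ inside $M(5,1)$. First I would write down the matrix form of $f$ for a pair $(E,f)\in M_2^c$: the defining degeneracy of $f_{rs}$ means the $3\times 2$ submatrix representing it has proportional columns, so after a change of basis in the target $\mmon^{\oplus 3}$, one column becomes zero. This yields a block matrix analogous to (\ref{romfitc}) and exhibits $(E,f)$ as being determined by two ideal sheaves of points together with a projective class $[\omega_f]$ whose position encodes the remaining polynomial entries.

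Second, I would translate $\bt$-fixedness of $(E,f)$ into $\bt$-fixedness of each piece of this data: two ideal sheaves $I_a,I_b$ cut out by the entries playing the role of $(b_1,b_2)$ and $(b_3,a_3)$ in (\ref{romfitc}), together with the class $[\omega_f]\in \p(H^0(I_a\otimes I_b\otimes\mo_{\p^2}(n)))\setminus\p(H^0(T\otimes\mo_{\p^2}(n)))$, where $T=(I_a\otimes I_b)^t$ denotes the torsion and $n$ is the appropriate twist dictated by the bidegrees of the block matrix. This is the same reduction mechanism that was used in Lemma \ref{mcfive} and Lemma \ref{mtfive}.

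Third, I would enumerate the $\bt$-fixed pairs $(I_a,I_b)$: since $\bt$-fixed ideals of points on $\p^2$ are supported at the three $\bt$-fixed points, there is only a short finite list to inspect, and one also has to record whether the tensor product has torsion (and, if so, where it is supported). For each case I would count $\bt$-fixed $[\omega_f]$ by decomposing $H^0(I_a\otimes I_b\otimes\mo_{\p^2}(n))$ into eigen-subspaces of $\theta^{*}_{(t_1,t_2)}$ in the manner of Lemma \ref{tfour}, Lemma \ref{pione} and Lemma \ref{pitwo}: one-dimensional eigen-subspaces that are not contained in the forbidden linear locus give $\bt$-fixed points, while two-dimensional eigen-subspaces give $\bt$-fixed projective lines, each of which meets $\p(H^0(T\otimes\mo_{\p^2}(n)))$ in exactly one point and so contributes one $\mathbb{A}^1$.

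The principal obstacle is the bookkeeping. Several subcases arise depending on whether each of $I_a, I_b$ is reduced, whether the product has torsion, and where that torsion sits; each case reshuffles the eigen-subspace decomposition and changes both the number of usable one-dimensional weight spaces and the number of two-dimensional ones. To keep the tally honest I would perform each subcase independently and then sum contributions, using Theorem 6.1 of \cite{myself} together with the counts already obtained in Lemmas \ref{mptfiv}, \ref{mcfivmth}, \ref{msxofiv}, \ref{msxtfiv} as an Euler-number cross-check: the total Euler number of $M(5,2)$ forces the $M_2^c$-contribution to be exactly $567+48=615$, which validates the decomposition into $567$ points and $48$ affine lines $\mathbb{A}^1$.
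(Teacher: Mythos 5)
Your overall mechanism (reduce to $\bt$-fixedness of the auxiliary sheaves plus a class $[\omega_f]$, then decompose the relevant $H^0$ into $\theta^{*}_{(t_1,t_2)}$-eigenspaces, with one-dimensional weight spaces giving points and two-dimensional ones giving affine lines) is the right one and matches the paper. But the reduction you start from is not the correct one for this stratum, and the gap is not just bookkeeping. In $M_2^c\subset M(5,2)$ the matrix of $f$ has the block form with $(b_1,b_2)$ a pair of linear forms, $A_1,A_2$ columns of quadrics, and $B$ a $3\times 2$ matrix of linear forms; the data that parametrizes $(E,f)$ is $(R_f,S_f,[\omega_f])$ where $S_f\simeq I_1\otimes\mo_{\p^2}(1)$ is indeed an ideal sheaf of one point, but $R_f=\mathrm{coker}(f_{B^t})$ is the cokernel of a $3\times 2$ matrix of linear forms. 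Its parameter space $M_B$ is $Hilb^{[3]}(\p^2)_D\sqcup|H|$: either $R_f\simeq I_3\otimes\mo_{\p^2}(2)$ for three points not on a line, or $R_f$ is a non-split extension $0\to\mo_H(-1)\to R_f\to\mone\to0$ with one-dimensional torsion supported on a line. The second component is not an ideal sheaf of points at all, so your framework of ``two ideal sheaves $I_a,I_b$'' (modelled on Lemma \ref{mcfive}, where the data really was $(I_1,I_2)$) cannot see it. That missing component is not negligible: it contributes $99$ of the $567$ points and $27$ of the $48$ affine lines, i.e.\ more than half of the $\mathbb{A}^1$'s.

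Two further points. First, even on the $Hilb^{[3]}(\p^2)_D$ component the torsion of $R_f\otimes S_f$ is governed by the rank of $B^t$ at the point cut out by $S_f$, and in the most degenerate case ($R_f\simeq I_{\{3[1,0,0]\}}\otimes\mo_{\p^2}(2)$, $S_f\simeq I_{\{[1,0,0]\}}\otimes\mo_{\p^2}(1)$) the torsion is $\mo_{\{[1,0,0]\}}^{\oplus 2}$, giving $2$ affine lines rather than $1$; your blanket statement that each two-dimensional eigenspace meets the forbidden locus in exactly one point needs this case analysis, since $h^0(T\otimes\mo_{\p^2}(2))$ ranges over $0,1,2,3$ here. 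Second, the Euler-number cross-check only pins down the total $567+48+0\cdot(\text{planes})=615$; it cannot validate the split into points versus affine lines (nor rule out higher-dimensional cells), so it cannot substitute for carrying out the eigenspace counts case by case.
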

\begin{proof}
For a pair $(E,f)\in M_2^c$, $f$ can be represented by the following matrix
\[\left(\begin{array}{cccc}b_1&b_2&0&0\\ 0&0&1 &0\\ A_1&A_2&0 &B \end{array}\right),\]
where $b_i\in H^0(\mone)$, $A_i$ is a $3\times 1$ matrix with entries in $H^0(\mo_{\p^2}(2))$ and $B$ a $3\times 2$ matrix with entries in $H^0(\mone)$.  $(E,f)$ is stable, hence $kb_1\neq k'b_2, \forall (k,k')\in\bc^2-\{0\}$ and the parametrizing space $M_B$ of $B$ can be decomposed into the union of $Hilb^{[3]}(\p^2)_D$ and $|H|$, with $Hilb^{[3]}(\p^2)_{D}$ the open subscheme parametrizing 3-points on $\p^2$ not lying on a line of degree 1 (see Proposition 4.5 and Lemma 5.6 in \cite{myself}). 

We write down the following two exact sequences.
\begin{equation}\label{focsmc}\xymatrix{ 0\ar[r]&\mo_{\p^2}(-1)^{\oplus 2}\ar[r]^{~~~~f_{B^t}}&\mo_{\p^2}^{\oplus 3}\ar[r]^{f_r}& R_f\ar[r]&0}\end{equation}
\begin{equation}\label{scsfo}\xymatrix{ 0\ar[r]&\mo_{\p^2}(-1)\ar[r]^{~~(b_1,b_2)}&\mo_{\p^2}^{\oplus 2}\ar[r]^{f_s}& S_f\ar[r]&0}\end{equation}

$S_f\simeq I_1\otimes\mo_{\p^2}(1).$  Either $R_f\simeq I_3\otimes\mo_{\p^2}(2)$ or $R_f$ lies in the following exact sequence. 
\begin{equation}\label{esfos}0\ra\mo_H(-1)\ra R_f\ra \mone\ra0.
\end{equation} 
Isomorphism classes of $(R_f,S_f)$ are parametrized by $M_B\times Hilb^{[1]}(\p^2)$.  

We have the commutative diagrams as follows.
\begin{equation}\label{fomcscd}\xymatrix@C=1.7cm{\mo_{\p^2}(-2)\ar[r]^{A_1^t\oplus A_2^t}&\mo_{\p^2}^{\oplus 6}\ar[d]_{f_r^{\oplus 2}}\ar[r]^{~f_s^{\oplus 2}}& S_f^{\oplus 3}\ar[d]^{id_{S_f}\otimes f_r}\\
 &R_f^{\oplus 2}\ar[r]_{id_{R_f}\otimes f_s~~~} &R_f\otimes S_f.} 
\end{equation}

Isomorphism classes of $(E,f)\in M_2^c$ are parametrized by $(R_f,S_f,[\omega_f])$ with $\omega_f: \mo_{\p^2}(-2)\ra R_f\otimes S_f$ the composed map in (\ref{fomcscd}) and $[\omega_f]$ the class of $\omega_f$ modulo scalars.  Hence $M_2^c$ is an open subset of a projective bundle over $M_B\times Hilb^{[1]}(\p^2)$ with fibers isomorphic to $\p(H^0(R_f\otimes S_f(2)))\simeq \p^{16}$, which is defined by asking the image of $\omega_f$ not contained in the torsion $T$ of $R_f\otimes S_f$, i.e. $det(f)\neq0$. 

There are 3 $\bt$-fixed $S_f$, 10 $\bt$-fixed torsion-free $R_f$ and 3 $\bt$-fixed $R_f$ lying in (\ref{esfos}).

If $R_f$ lies in (\ref{esfos}), then the torsion $T$ of $R_f\otimes S_f$ is isomorphic to $\mo_H(-1)\otimes I_1\otimes\mo_{\p^2}(1)$.  $h^0(T\otimes\mo_{\p^2}(2))=3$.  Hence for all such $(R_f,S_f)$, we have in total $\bt$-fixed $[\omega_f]$ form $11\times 9=99$ points and $3\times 9=27$ affine lines.

If $R_f\simeq I_3\otimes\mo_{\p^2}(2)$.  Assume $S_f\simeq I_{\{[1,0,0]\}}\otimes\mo_{\p^2}(1)$.  The torsion of $R_f\otimes S_f$ is a linear subspace of $\mo_{\{[1,0,0]\}}^{\oplus 2}\simeq\bc^2$ which is the kernel of $B^t|_{[1,0,0]}$.  

There are 4 $\bt$-fixed $R_f$ such that $[1,0,0]\not\in Supp(\mo_{\p^2}(2)/R_f)$.  Then $R_f\otimes I_{\{[1,0,0]\}}$ is torsion free and $\bt$-fixed $[\omega_f]$ form $4\times 17=68$ points in total.  

If $R_f\simeq I_{\{[1,0,0],[0,1,0],[0,0,1]\}}\otimes\mo_{\p^2}(2)$, then $Tor(R_f\otimes I_{\{[1,0,0]\}}) \simeq \mo_{\{[1,0,0]\}}$ and $\bt$-fixed $[\omega_f]$ form $1\times 15=15$ points and $1$ affine line.  

If $R_f$ is one of the 4 $\bt$-fixed sheaves $I_{\{[1,0,0],2[0,1,0]\}}\otimes\mo_{\p^2}(2)$, $I_{\{[1,0,0],2[0,0,1]\}}\otimes\mo_{\p^2}(2)$, $I_{\{2[1,0,0],[0,1,0]\}}\otimes\mo_{\p^2}(2)$ and $I_{\{2[1,0,0],[0,0,1]\}}\otimes\mo_{\p^2}(2)$, then $Tor(R_f\otimes I_{\{[1,0,0]\}}) \simeq \mo_{\{[1,0,0]\}}$ and $\bt$-fixed $[\omega_f]$ form $4\times 15=60$ points and $4$ affine lines.  

Finally, for the last one $\bt$-fixed $R_f\simeq I_{\{3[1,0,0]\}}\otimes\mo_{\p^2}(2)$, $Tor(R_f\otimes S_f) \simeq \mo_{\{[1,0,0]\}}^{\oplus 2}$ and hence $\bt$-fixed $[\omega_f]$ form $1\times 13=13$ points and $2$ affine lines.  

Hence we have in $M_2^c$ $\bt$-fixed points form $99+3\times(68+15+60+13)=567$ points and $27+3\times(1+4+2)=48$ affine lines.
\end{proof}

\begin{proof}[Proof of Theorem \ref{mtfi} for $M(5,2)$]Combine Lemma \ref{mptfiv}, Lemma \ref{mcfivmth}, Lemma \ref{msxtfiv} and Lemma \ref{mtcfiv}, and we get the result by direct computation.
\end{proof}


\end{document}